\newtheorem{Def}{Definition}[section]
\newtheorem{Lem}{Lemma}[section]
\newtheorem{Teo}{Theorem}[section]
\newtheorem{Cor}{Corollary}[section]
\newtheorem{Rem}{Remark}[section]
\newtheorem{NotOurTheorem}{Theorem}
\renewcommand{\le}{\leqslant}
\renewcommand{\ge}{\geqslant}
\newcommand*{\xx}{\mathbf{x}}
\newcommand*{\yy}{\mathbf{y}}
\newcommand*{\ww}{\mathbf{w}}
\newcommand*{\ee}{\mathbf{e}}
\newcommand*{\rr}{\mathbf{r}}
\newcommand*{\cc}{\mathbf{c}}
\newcommand*{\RR}{\mathbb{R}}
\newcommand{\uRR}{\underline{\mathbb{R}}}
\newcommand*{\NN}{\mathbb{N}}
\newcommand*{\Simplex}[1]{\overline{S^{#1}}}
\newcommand*{\Sinf}{\Simplex{\infty}}
\newcommand*{\mol}{\overline{m}}
\newcommand*{\molsegm}[1]{\overline{m^{#1}}}
\newcommand*{\mul}{\underline{m}}
\newcommand*{\mulsegm}[1]{\underline{m^{#1}}}
\def\ds{\displaystyle}
\begin{document}

\title{Minimax and maximin problems for sums of translates on the real axis}

\author{Tatiana M. Nikiforova}

\date{}

\keywords{Minimax and maximin problems, weighted Bojanov problems, sum of translates function, Mhaskar-Rakhmanov-Saff theorem, intertwining
property}

\thanks{The work was performed as part of research conducted in the Ural Mathematical Center with the financial support of the Ministry of Science and Higher Education of the Russian Federation (Agreement number 075-02-2024-1377)}
\subjclass[2010]{26A51, 26D07, 49K35}

\begin{abstract}
Sums of translates generalize logarithms of weighted algebraic polynomials. The paper presents the solution to the minimax and maximin problems on the real axis for sums of translates. We prove that there is a unique function that is extremal in both problems. The key in our proof is a reduction to the problem on a segment. For this, we work out an analogue of the Mhaskar-Rakhmanov-Saff theorem, too.    
\end{abstract}

\maketitle

\section{Introduction}

\subsection{The minimax and maximin problems on a segment}
In 1978, B.~D.~Bojanov generalized the famous Chebyshev's theorem on polynomials least deviating from zero on a segment \cite{Bojanov}.

\begin{NotOurTheorem} \label{theorem:Bojanov}
Let $n \in \NN$ and $\nu_1, \ldots, \nu_n > 0$ be integers. There is a unique set of points $x^*_1 \le \ldots \le x^*_n$
such that
\[
\|(x-x^*_1)^{\nu_1}\cdot \ldots \cdot (x-x^*_n)^{\nu_n}\| = \inf \limits_{x_1 \le \ldots \le x_n} \|(x-x_1)^{\nu_1}\cdot \ldots \cdot (x-x_n)^{\nu_n}\|,
\]
where $\|\cdot\|$ is the sup norm over $[0, 1]$. Moreover, $0 < x^*_1 < \ldots < x^*_n < 1$, and the extremal polynomial $T(x) := (x-x^*_1)^{\nu_1}\cdot \ldots \cdot (x-x^*_n)^{\nu_n}$ is characterized by an equioscillation property: there exists an array of points $0 = t_0 < t_1 < \ldots < t_{n-1} < t_n = 1$ such that
\[
T(t_k)=(-1)^{\nu_{k+1}+\ldots+\nu_{n}}\|T\|, \quad k=0,\ldots,n.
\]
\end{NotOurTheorem}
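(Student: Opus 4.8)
The plan is the classical route for equioscillation theorems: produce a minimizer by compactness, show by a perturbation argument that every minimizer has simple interior nodes and equioscillates in the stated way, and deduce uniqueness by a Chebyshev-type sign count. \emph{Existence and reduction to $[0,1]$.} If a competitor $(x_1,\ldots,x_n)$ has a coordinate outside $[0,1]$, replacing it by the nearer endpoint of $[0,1]$ does not increase $|x-x_i|$ for any $x\in[0,1]$, hence does not increase the sup norm; so the infimum over all ordered tuples equals the infimum over the compact set $\{0\le x_1\le\cdots\le x_n\le1\}$, on which $(x_1,\ldots,x_n)\mapsto\bigl\|\prod_i(x-x_i)^{\nu_i}\bigr\|$ is continuous. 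Fix a minimizer; write $T$ for the corresponding monic polynomial of degree $N:=\nu_1+\cdots+\nu_n$ and set $M:=\|T\|$.

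\emph{The nodes are distinct and lie in $(0,1)$.} If some node equals $0$, then $T(0)=0$, so $M$ is attained at some $t^*\ge\delta>0$; moving that node to a small $\varepsilon\in(0,\delta)$ multiplies $|T(x)|$ by a factor $<1$ at every $x\in[\varepsilon,1]$ with $T(x)\ne0$ and keeps $|\cdot|\le C\varepsilon^{\nu_1}$ on $[0,\varepsilon]$, so the norm strictly decreases for small $\varepsilon$, contradicting minimality (symmetrically at the right endpoint). If a value $\mu\in(0,1)$ is a node of multiplicity $m\ge2$ carrying exponents $\nu_{(1)},\ldots,\nu_{(m)}$, replace these colliding nodes by $\mu+\varepsilon c_1<\cdots<\mu+\varepsilon c_m$, where the $c_\ell$ are chosen with $\sum_\ell\nu_{(\ell)}c_\ell=0$ and not all zero; since $T(\mu)=0$ the norm is attained away from $\mu$, and there the first-order change of $\log|T|$ vanishes while the second-order term is $-\frac{\varepsilon^2}{2(x-\mu)^2}\sum_\ell\nu_{(\ell)}c_\ell^2+O(\varepsilon^3)<0$, so again the norm strictly decreases. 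Hence $0<x_1^*<\cdots<x_n^*<1$. Put $I_0:=[0,x_1^*)$, $I_k:=(x_k^*,x_{k+1}^*)$ for $1\le k\le n-1$, and $I_n:=(x_n^*,1]$; then $T$ has constant sign $\varepsilon_k:=(-1)^{\nu_{k+1}+\cdots+\nu_n}$ on $I_k$, and $\log|T|$ is strictly monotone on $I_0$ and on $I_n$ (so its maximum over $\overline{I_0}$ is at $0$ and over $\overline{I_n}$ at $1$) and strictly concave on each inner $I_k$.

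\emph{Equioscillation.} Let $A:=\{k:\max_{\overline{I_k}}|T|=M\}$, a non-empty subset of $\{0,\ldots,n\}$. Assume for contradiction that $A\ne\{0,\ldots,n\}$, so $|A|\le n$; then the extremal set $E:=\{x\in[0,1]:|T(x)|=M\}$ consists of exactly $|A|$ points, one in each $I_k$ with $k\in A$, all at distance $\ge\rho>0$ from every node. Perturb $x_i^*\mapsto x_i^*+\tau\delta_i$; using $\log(1-s)\le-s$ one obtains, uniformly on $E$ and for $0<\tau<\rho/2$,
\[
\log|\widetilde T(t)|\ \le\ \log M-\tau\,g(t),\qquad g(t):=\sum_{i=1}^n\frac{\nu_i\delta_i}{t-x_i^*},
\]
so it is enough to choose $(\delta_i)$ with $g>0$ on $E$: then $|\widetilde T|<M$ on $E$, hence $\|\widetilde T\|<M$ for small $\tau$ (by continuity, since $|T|<M$ off $E$), contradicting minimality. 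Now $g=P/\prod_i(x-x_i^*)$ with $\deg P\le n-1$, and by Lagrange interpolation every polynomial $P$ of degree $\le n-1$ arises from some $(\delta_i)$; since $\prod_i(t-x_i^*)$ has sign $(-1)^{n-k}$ on $I_k$, we need $P$ with $\operatorname{sgn}P=(-1)^{n-k}$ on $E\cap I_k$ for all $k\in A$. Because $|A|\le n$ this is possible: place one simple zero of $P$ strictly between consecutive points of $E$ exactly where the required sign changes, which uses at most $|A|-1\le n-1$ zeros. Thus $A=\{0,\ldots,n\}$, and choosing $t_k$ to be the point of $\overline{I_k}$ at which $|T|=M$ gives $0=t_0<t_1<\cdots<t_n=1$ with $T(t_k)=\varepsilon_kM=(-1)^{\nu_{k+1}+\cdots+\nu_n}\|T\|$.

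\emph{Uniqueness, and the main obstacle.} Since the preceding steps apply to any minimizer, if $T=\prod_i(x-x_i^*)^{\nu_i}$ and $S=\prod_i(x-y_i^*)^{\nu_i}$ both attain $M$, then both are monic of degree $N$, both equioscillate with exactly $n+1$ extremal points (including $0$ and $1$), and between two consecutive extremal points of $T$ (resp.\ of $S$) there is exactly one node $x_k^*$ (resp.\ $y_k^*$). If $T\ne S$, then $D:=T-S\not\equiv0$ has degree $\le N-1$, and by comparing the signs of $D$ at the extremal points of $T$ and of $S$ — using $|T|,|S|\le M$ together with the pattern $\varepsilon_k$, the double zeros of $D$ at interior points shared by the two node systems, and the sign changes forced by the interlacing of $\{x_k^*\}$ and $\{y_k^*\}$ — one produces more than $N-1$ zeros of $D$ in $[0,1]$ counted with multiplicity, which is impossible; this is the ``intertwining'' mechanism named in the keywords. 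I expect this final count to be the genuinely delicate point, since the exponents $\nu_k$ are not all $1$, so the signs $\varepsilon_k$ need not strictly alternate and a naive Chebyshev argument does not suffice; in the equioscillation step the only non-routine ingredient is the realizability of the required sign pattern for $g$ with $\deg P\le n-1$, which is exactly where the one ``free'' bump guaranteed by $|A|\le n$ enters.
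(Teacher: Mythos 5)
The paper does not prove this statement at all: it is Bojanov's theorem, quoted as background (Theorem~A, with reference to Bojanov's 1979 paper), and the paper's own contribution handles the analogous questions via the sums-of-translates machinery (Theorem~C of Farkas--Nagy--R\'ev\'esz and its intertwining property for singular, strictly concave kernels), not by a direct polynomial argument. So your attempt can only be judged on its own merits. Your first three steps are essentially sound: the reduction to the compact simplex, the variational exclusion of boundary and multiple nodes, and the equioscillation argument via the perturbation $x_i^*\mapsto x_i^*+\tau\delta_i$ with $g=P/\prod_i(x-x_i^*)$, $\deg P\le n-1$, realizing the required sign pattern with at most $|A|-1\le n-1$ sign changes. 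This is a correct and fairly clean route to existence and to ``every minimizer equioscillates.''

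The genuine gap is exactly where you flag it, and it is not a small finishing touch: uniqueness (and with it the ``characterized by equioscillation'' claim, which needs the converse direction that an equioscillating configuration is extremal) is the hard part of Bojanov's theorem, and your sketch for it does not go through as stated. At an extremal point $t_k$ of $T$ you only get $\varepsilon_k D(t_k)\ge 0$ with possible equality, the signs $\varepsilon_k=(-1)^{\nu_{k+1}+\cdots+\nu_n}$ need not alternate when some $\nu_k$ are even, the ``double zeros of $D$ at interior points shared by the two node systems'' need not exist (the node systems of two minimizers need not share any point, and $D=T-S$ has no forced multiple zeros at nodes of only one of them), and no interlacing of $\{x_k^*\}$ and $\{y_k^*\}$ has been established --- it is usually a consequence of, not an input to, the uniqueness argument. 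Consequently the claim that one can produce more than $N-1$ zeros of $D$ counted with multiplicity is not backed by the listed ingredients. Bojanov's own proof, and the modern treatment the present paper leans on, replace this na\"ive zero count by a comparison/intertwining lemma formulated for the sums $\sum_j \nu_j\log|x-x_j|$ (strict concavity and singularity of the kernel $\log|\cdot|$ are what make the strict comparison of the local maxima $m_j$ possible); some substitute of that strength is needed here, and without it your uniqueness and characterization claims remain unproven.
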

By taking logarithm, we can write Bojanov's problem in the following form
\[
\mathbf{minimize} \text{ (in } x_1 \le \ldots \le x_n) \quad
\max \limits_{x \in [0,1]} \sum \limits_{j=1}^n \nu_j \log |x-x_j|.
\]
It is natural to consider weighted sup norms. That is, for a weight function $w(x) \ge 0$, $x\in [0,1]$, we can also consider the problem
\[
\mathbf{minimize} \text{ (in } x_1 \le \ldots \le x_n) \quad
\max \limits_{x \in [0,1]} \left( \log w(x) + \sum \limits_{j=1}^n \nu_j \log |x-x_j| \right).
\]

In 2000, P.~C.~Fenton \cite{Fenton} considered a generalization of the weighted Chebyshev problem and solved a dual maximization problem, too. He worked with the so-called sums of translates functions, generalizing the above weighted sum of logarithms. 
Now, to formulate Fenton's result, let us give some definitions. We use a different notation than Fenton's original one. Instead, we use the notation from the recent papers \cite{FNR} and \cite{NewFNR}, to which we will return later. 

\begin{Def}
Let $0<p\le\infty.$
A function $K \colon (-p,0) \cup (0, p) \to \RR$ is called a kernel function if $K$ is concave on $(-p,0)$ and on $(0, p)$
and $\lim \limits_{t \downarrow 0} K(t)=\lim \limits_{t \uparrow 0} K(t),$ which are either real or equal to $-\infty$.
\end{Def} 

If additionally $K$ is decreasing on $(-p, 0)$ and increasing on $(0, p)$, then we call $K$ \textit{monotone}.

We extend $K$ by defining
\[
K(0) = \lim \limits_{t \to 0} K(t), \quad K(-p) = \lim \limits_{t \downarrow -p} K(t), \quad K(p) = \lim \limits_{t \uparrow p} K(t).
\]
If
$K(0) = - \infty$, the kernel function $K$ is called \textit{singular}.

\begin{Def}
Let $A$ be a segment, a semiaxis or $\RR$. We call a function $J \colon A \to \uRR := \RR \cup \{-\infty\}$ an external $n$-field function or simply a field on $A$ if $J$ is bounded above on $A$ and it assumes finite values at more than $n$ different points of $A,$ where in the case of a segment we count boundary points with weights $1/2.$
\end{Def}

Here we impose precisely the conditions on the weights of the points to keep consistency with the case of a segment. In this case, it is necessary that there are at least $n$ interior points and some additional one anywhere in the segment, where the field is finite.

Let $r_1, \ldots, r_n > 0$. Denote $\yy := (y_1, \ldots, y_n),$ where $y_1 \le \ldots \le y_n$. Replacing $\log |\cdot-y_j|$ by $K(\cdot-y_j)$ and $\log w$ by $J$, we obtain \textit{the sum of translates function}:
\begin{align} \label{eq:sumOfTranslates}
F(\yy,t) = J(t) + \sum \limits_{j=1}^n r_j K(t-y_j).
\end{align}
The sum of translates method originates from Fenton. Initially, Fenton's goal was to prove a conjecture of P.~D.~Barry from 1962 on the growth of entire functions. Fenton succeeded in this in 1981~\cite{FentonBarry}. And even though this conjecture was proved a little earlier by A.~A.~Goldberg~\cite{Goldberg}, Fenton got other nice results in the theory of entire functions using his approach of the sums of translates \cite{FentonOther1}, \cite{FentonOther2}.

Consider a segment $[a,b]$. In what follows, we will denote by $\Simplex{[a,b]}$ the closed simplex
\[
\Simplex{[a,b]} := 
\{\yy = (y_1, \ldots, y_n) \in \RR^n: \ a \le y_1 \le \ldots \le y_n \le b\}.
\]

Let $\yy \in \Simplex{[a,b]}$ and $F(\yy,t)$ be defined for $t \in [a,b]$. Denote
\begin{align*}
m^{[a,b]}_0(\yy) & := \sup \limits_{t \in [a, y_1]} F(\yy, t), \quad
m^{[a,b]}_n(\yy) := \sup \limits_{t \in [y_n, b]} F(\yy, t),\\
m^{[a,b]}_j(\yy) & := \sup \limits_{t \in [y_j, y_{j+1}]} F(\yy, t), \quad j = 1, \ldots, n-1,
\end{align*}
and 
\[
\molsegm{[a,b]}(\yy) := \max \limits_{j=0,\ldots,n} m^{[a,b]}_j(\yy) = \sup \limits_{t \in [a,b]} F(\yy, t), \quad
\mulsegm{[a,b]}(\yy) := \min \limits_{j=0,\ldots,n} m^{[a,b]}_j(\yy).
\]
Note that for any $\yy \in \Simplex{[a,b]}$ the value of $\molsegm{[a,b]}(\yy)$ is finite. Indeed, we have $\molsegm{[a,b]}(\yy) > -\infty$, since $J$ is finite at least at $n+1$ points.
Further, $\molsegm{[a,b]}(\yy) < \infty,$ as $K$ is concave and $J$ is bounded above.

Next, we also define
\[
M(\Simplex{[a,b]}) := \inf \limits_{\yy \in \Simplex{[a,b]}} \left(\molsegm{[a,b]}(\yy)\right), \quad m(\Simplex{[a,b]}) := \sup \limits_{\yy \in \Simplex{[a,b]}} \left(\mulsegm{[a,b]}(\yy)\right).
\]

If $a=-b,$ then we will write $\Simplex{b}, \ m^{b}_j(\yy), \ \molsegm{b}(\yy), \ \mulsegm{b}(\yy), \ M(\Simplex{b})$ and $m(\Simplex{b}).$

A point $\yy \in \Simplex{[a,b]}$ is called an equioscillation point on $[a,b]$ if
\[
m^{[a,b]}_0(\yy) = m^{[a,b]}_1(\yy) = \ldots = m^{[a,b]}_n(\yy).
\]

Now consider $[a,b] = [0,1]$. Let $K^1$ be a kernel defined on $[-1,1]$ and $J^1$ be an $n$-field defined on $[0,1].$ Fenton proved the following theorem \cite{Fenton}.

\begin{NotOurTheorem} \label{theorem:Fenton}
Let $K^1$ be a monotone kernel defined on $[-1,1]$, $K^1$ be twice differentiable on $[-1,1] \setminus \{0\}$, $(K^1)'' < 0$ and
\[
\lim \limits_{t \to 0} |(K^1)'(t)| = \infty.
\]
Let $n \in \NN, \ r_1=\ldots=r_n=1$. 
Assume that $J$ is a concave function on $(0,1)$ and put
\[
J(0) = \lim \limits_{t \downarrow 0} J(t), \
J(1) = \lim \limits_{t \uparrow 1} J(t).
\]
Consider the sum of translates function \eqref{eq:sumOfTranslates}. 
Then there is a unique point $\ww = (w_1, \ldots, w_n) \in \Simplex{[0,1]}$ such that
\[
\molsegm{[0,1]}(\ww) = M(\Simplex{[0,1]}), \quad
\mulsegm{[0,1]}(\ww) = m(\Simplex{[0,1]}).
\]
This extremal point $\ww$ is the unique equioscillation point on $[0,1].$
\end{NotOurTheorem}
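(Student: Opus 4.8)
The plan is to work throughout with the $n+1$ functions $\yy\mapsto m^{[0,1]}_j(\yy)$, $j=0,\dots,n$, on the compact simplex $\Simplex{[0,1]}$, and to produce the claimed extremal point as the unique \emph{equioscillation point} --- the unique $\yy$ at which all the $m^{[0,1]}_j(\yy)$ coincide. All three assertions of the theorem will then follow from the structure of these functions.

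First I would record the preliminary facts. The derivative blow-up $\lim_{t\to0}|(K^1)'(t)|=+\infty$ forces $K^1(0)=-\infty$, so for fixed $\yy$ the function $F(\yy,\cdot)$ tends to $-\infty$ at every node; hence each $m^{[0,1]}_j(\yy)$ is attained at an \emph{interior} point of the corresponding interval, at a \emph{unique} such point because $(K^1)''<0$ and the concavity of $J$ make $F(\yy,\cdot)$ strictly concave on each gap, and $m^{[0,1]}_j(\yy)\to-\infty$ as the $j$-th interval shrinks to a point; moreover $\yy\mapsto m^{[0,1]}_j(\yy)$ is continuous on $\Simplex{[0,1]}$. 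The two engines are: \textbf{(i) a monotonicity lemma} --- if $\yy'$ arises from $\yy$ by increasing only the coordinate $y_k$, then $m^{[0,1]}_j(\yy')>m^{[0,1]}_j(\yy)$ for $j<k$ and $m^{[0,1]}_j(\yy')<m^{[0,1]}_j(\yy)$ for $j\ge k$; this is obtained by differentiating $K^1(t-y_k)$ in $y_k$, using that $K^1$ is decreasing on $(-1,0)$ and increasing on $(0,1)$, and noting that, the relevant suprema being attained at interior points, the strict pointwise change survives in the supremum; and \textbf{(ii) concavity} of each $m^{[0,1]}_j$, hence of $\mulsegm{[0,1]}(\yy)=\min_j m^{[0,1]}_j(\yy)$, on $\Simplex{[0,1]}$: indeed $F(\yy,t)$ is a sum of concave functions of the affine forms $t$ and $t-y_i$, so it is jointly concave on the convex set $\{(\yy,t):y_j\le t\le y_{j+1}\}$, and the partial maximum of a jointly concave function over a convex constraint set is concave in the remaining variables.

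Next I would establish \emph{existence} of an equioscillation point $\ee$. The map $\Phi=(m^{[0,1]}_1-m^{[0,1]}_0,\dots,m^{[0,1]}_n-m^{[0,1]}_{n-1})$ is continuous on the interior of $\Simplex{[0,1]}$; in the gap coordinates $g_0=y_1,\ g_i=y_{i+1}-y_i\ (1\le i\le n-1),\ g_n=1-y_n$ (in which $\Simplex{[0,1]}$ is the standard $n$-simplex) the blow-up $m^{[0,1]}_j\to-\infty$ on the facet $\{g_j=0\}$ shows that the $j$-th component of $\Phi$ is $+\infty$ near the facet $\{g_{j-1}=0\}$ and $-\infty$ near $\{g_j=0\}$, and a Poincar\'e--Miranda / degree argument adapted to the simplex then yields a zero $\ee$ in the interior, i.e.\ $m^{[0,1]}_0(\ee)=\dots=m^{[0,1]}_n(\ee)=:c$. (Alternatively one takes a minimizer $\ww$ of the continuous function $\molsegm{[0,1]}$ on $\Simplex{[0,1]}$, pushes it off the boundary via the monotonicity lemma, and runs a node-moving argument; the topological route is cleaner because a split ``active'' set of intervals is awkward for single-node perturbations.)

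Finally I would show $\ee$ is extremal in both problems and unique. From $\sum_j(e_{j+1}-e_j)=\sum_j(y_{j+1}-y_j)=1$ (with $e_0=y_0=0$, $e_{n+1}=y_{n+1}=1$), an elementary interlacing argument on the displacements $y_j-e_j$ gives an index $j$ with $[e_j,e_{j+1}]\subseteq[y_j,y_{j+1}]$ and an index $k$ with $[y_k,y_{k+1}]\subseteq[e_k,e_{k+1}]$. The crux --- and the step I expect to be the main obstacle --- is to deduce from such an inclusion, \emph{together with} the monotonicity lemma applied coordinate by coordinate along a carefully chosen path from $\ee$ to $\yy$ (so as to keep the nodes other than the two matched ones from spoiling the comparison), that $m^{[0,1]}_j(\yy)\ge c$ and $m^{[0,1]}_k(\yy)\le c$, with equality only if the matched intervals coincide. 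Granting this, $\molsegm{[0,1]}(\yy)\ge c$ and $\mulsegm{[0,1]}(\yy)\le c$ for every $\yy$, so $M(\Simplex{[0,1]})=c=m(\Simplex{[0,1]})$ and $\ee$ is extremal in both. For uniqueness: if $\molsegm{[0,1]}(\ww)=c$ then the interval of $\ww$ matched to an interval of $\ee$ must coincide with it; passing to the two sub-segments it cuts off and inducting on $n$ forces $\ww=\ee$, and likewise for $\mulsegm{[0,1]}$ (or, more directly, the concavity from (ii) gives uniqueness of the maximin point); the intertwining of any two equioscillation configurations, together with the strict monotonicity in (i), shows there is a single equioscillation point, which is therefore the common extremal point.
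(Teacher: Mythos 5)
First, a point of reference: the paper does not prove Theorem~B at all --- it is quoted as Fenton's result, and the paper's own arguments run through Theorem~C instead --- so your sketch can only be judged on its own merits. Judged that way, it has genuine gaps. The first is your very first claim: $\lim_{t \to 0}|(K^1)'(t)|=+\infty$ does \emph{not} force $K^1(0)=-\infty$; the kernel $K^1(t)=\sqrt{|t|}$ satisfies every hypothesis of the theorem (monotone, twice differentiable off $0$, $(K^1)''<0$, derivative blow-up) and has $K^1(0)=0$. Everything you build on singularity therefore collapses for such kernels: $F(\yy,\cdot)$ need not tend to $-\infty$ at the nodes, the suprema $m^{[0,1]}_j(\yy)$ need not be attained at interior points (so the strictness you claim for the monotonicity lemma is unjustified), and $m^{[0,1]}_j$ does not blow up to $-\infty$ when the $j$-th gap degenerates --- which is exactly the facet behaviour your Poincar\'e--Miranda existence argument requires. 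In Fenton's theorem the derivative condition plays a different role (it controls the variational/perturbation argument and prevents node collision); existence of an equioscillation point for non-singular kernels comes, for instance, from upper semicontinuity of the field as in Theorem~C, not from singularity.

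The second gap is the step you yourself flag as ``the main obstacle,'' and it is not a technicality but the heart of the matter. Knowing only $y_j \le e_j$ and $y_{j+1} \ge e_{j+1}$ gives no pointwise comparison between $F(\yy,\cdot)$ and $F(\ee,\cdot)$ on $[e_j,e_{j+1}]$, because the remaining nodes of $\yy$ may lie on either side of the corresponding nodes of $\ee$; and moving the coordinates one at a time from $\ee$ to $\yy$ via your monotonicity lemma pushes the two distinguished local maxima in opposite directions at different steps of the path, so no usable inequality survives. The assertion that $m^{[0,1]}_j(\yy)\ge c$ and $m^{[0,1]}_k(\yy)\le c$ for suitable $j,k$ is essentially the non-domination/intertwining theorem of Farkas--Nagy--R\'ev\'esz, whose proof is long and does not follow from single-coordinate monotonicity; writing ``granting this'' grants the theorem. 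The uniqueness part is also not secured: the concavity you establish (your observation that each $m^{[0,1]}_j$ is a partial maximum of a jointly concave function over a convex slab is correct) yields only that the set of maximin points of $\mulsegm{[0,1]}=\min_j m^{[0,1]}_j$ is convex, not that it is a single point, since the $m^{[0,1]}_j$ need not be strictly concave; and the induction you propose for uniqueness of the minimax point leans on the equality case of the unproven crux.
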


The problem of minimizing $\molsegm{[0,1]}$ is called the minimax problem, and the problem of maximizing $\mulsegm{[0,1]}$ is called the maximin problem.

At present, B.~Farkas, B.~Nagy, and Sz.~Gy.~R\'{e}v\'{e}sz have fruitfully developed the subject of sums of translates in their research. The following theorem combines the results of their articles \cite{FNR} and \cite{NewFNR}.
\begin{NotOurTheorem} \label{theorem:FNR}
Let $K^1: (-1, 0) \cup (0, 1) \to \RR$ be a monotone kernel function. Let $n \in \NN, \ r_1, \ldots, r_n > 0$ be arbitrary.  Assume that $J^1$ is an $n$-field function on $[0,1]$. Consider the sum of translates function \eqref{eq:sumOfTranslates}. 
Then 
\[
M(\Simplex{[0,1]}) = m(\Simplex{[0,1]})
\]
and 
there exists some point $\ww = (w_1, \ldots, w_n) \in \Simplex{[0,1]}$ at which the simplex minimax is attained: 
\[
\mol(\ww) = M(\Simplex{[0,1]}). 
\]
Furthermore, there are no $\xx, \yy$ with finite local maxima $m^{[0,1]}_j$ such that
\begin{gather} \label{theorem:FNR:weakIntertwining}
m^{[0,1]}_j(\xx) > m^{[0,1]}_j(\yy), \quad j=0,\ldots,n.
\end{gather}
If there exists an equioscillation point $\ee$, then it is the minimax and maximin point, i.e.,
\[
M(\Simplex{[0,1]})=\mol(\ee)=\mul(\ee)=m(\Simplex{[0,1]}). 
\]
Additionally,
\begin{enumerate}
\item If $J^1$ is upper semicontinuous or $K^1$ is singular, then there exists an equioscillation point.
\item If $K^1$ is singular and strictly concave, then the equioscillation point is unique. Moreover, the so-called intertwining property holds: the strict inequality \eqref{theorem:FNR:weakIntertwining} can be replaced by the non-strict one, if $\xx \neq \yy.$ In particular, this implies that the equioscillation point is the unique minimax and maximin point.
\end{enumerate}
\end{NotOurTheorem}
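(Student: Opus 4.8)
The plan is to proceed in four stages: produce a minimax point by compactness; reduce the ``equioscillation $=$ minimax $=$ maximin'' assertions to the single statement that \eqref{theorem:FNR:weakIntertwining} never holds; establish that statement (together with the existence of an equioscillation point under the hypotheses of~(1)) by a one‑node perturbation analysis; and finally upgrade to strict inequalities, hence to uniqueness, when $K^1$ is singular and strictly concave. For the first stage, note that since $K^1$ is concave on each of $(-1,0)$ and $(0,1)$ with coinciding one‑sided limits at $0$, it is continuous on $[-1,1]\setminus\{0\}$ and, with $K^1(0)$ defined to be that common limit (equal to $-\infty$ precisely in the singular case), it is upper semicontinuous and bounded above on $[-1,1]$. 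As $J^1$ is bounded above, $t\mapsto F(\yy,t)$ is bounded above for every $\yy$, so all the numbers $m^{[0,1]}_j(\yy)$ and $\molsegm{[0,1]}(\yy)$ are $<+\infty$, while the $n$‑field property (finiteness at more than $n$ points, via a separation/pigeonhole argument) forces $\molsegm{[0,1]}(\yy)>-\infty$ and $M(\Simplex{[0,1]})>-\infty$. For each fixed $t$ the map $\yy\mapsto F(\yy,t)$ is upper semicontinuous, hence $\molsegm{[0,1]}(\yy)=\sup_t F(\yy,t)$ is lower semicontinuous on the compact simplex $\Simplex{[0,1]}$ and attains its minimum, which yields a point $\ww$ with $\mol(\ww)=M(\Simplex{[0,1]})$.

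Grant for a moment that no $\xx,\yy$ with finite local maxima satisfy $m^{[0,1]}_j(\xx)>m^{[0,1]}_j(\yy)$ for every $j$. If $\ee$ is an equioscillation point with common value $c$, then no $\yy$ can have $m^{[0,1]}_j(\yy)<c$ for all $j$ (that would be a forbidden strict domination of $\ee$ over $\yy$), so $\molsegm{[0,1]}(\yy)\ge c$ for every $\yy$ and hence $M(\Simplex{[0,1]})\ge c$; together with $M(\Simplex{[0,1]})\le\mol(\ee)=c$ this gives $M(\Simplex{[0,1]})=c$. Dually, no $\yy$ can have $m^{[0,1]}_j(\yy)>c$ for all $j$, so $\mulsegm{[0,1]}(\yy)\le c$ for every $\yy$ and $m(\Simplex{[0,1]})\le c$, while $m(\Simplex{[0,1]})\ge\mul(\ee)=c$; hence $m(\Simplex{[0,1]})=c$. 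Thus $M(\Simplex{[0,1]})=m(\Simplex{[0,1]})=c$ and $\ee$ is simultaneously a minimax and a maximin point. The equality $M(\Simplex{[0,1]})=m(\Simplex{[0,1]})$ in the generality stated, i.e.\ without assuming a priori that $\ee$ exists, I would read off from the perturbation lemma below, which shows that a gap $\mulsegm{[0,1]}(\ww)<\molsegm{[0,1]}(\ww)$ at the minimax point $\ww$ cannot survive.

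The technical core is a quantitative one‑node perturbation lemma. Shifting a single node $y_k$ slightly to the right enlarges the cell $[y_{k-1},y_k]$ and, because $K^1$ is monotone, raises $F(\yy,\cdot)$ on $(-\infty,y_k)$ while lowering it on $(y_k,+\infty)$; the concavity of $K^1$ makes these effects quantitatively controllable and shows that the vector $(m^{[0,1]}_0(\yy),\dots,m^{[0,1]}_n(\yy))$ varies in a coherent, cell‑by‑cell monotone manner. If some $\xx\neq\yy$ had $m^{[0,1]}_j(\xx)>m^{[0,1]}_j(\yy)$ for all $j$, then deforming $\yy$ node by node toward $\xx$ and tracking the finitely many sign changes of the differences $m^{[0,1]}_j(\cdot)-m^{[0,1]}_{j+1}(\cdot)$ reduces the situation, by an intermediate‑value/degree argument, to configurations in ``interlacing position'', for which the monotonicity and concavity of $K^1$ give a direct cell‑by‑cell comparison contradicting $m^{[0,1]}_j(\xx)>m^{[0,1]}_j(\yy)$; this genuinely $n$‑dimensional bookkeeping is, to my mind, the main obstacle. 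The same lemma produces an equioscillation point under the hypotheses of~(1): one steers the deformation so that all the differences $m^{[0,1]}_j-m^{[0,1]}_{j+1}$ vanish simultaneously, which is possible once the maps in play are controlled near $\partial\Simplex{[0,1]}$ — precisely what upper semicontinuity of $J^1$, or singularity of $K^1$ (which keeps the extremal nodes inside the open interval $(0,1)$ and the local maxima $m^{[0,1]}_j$ finite throughout the deformation), provides. In the regular sub‑case this existence can alternatively be obtained by approximating the data by Fenton's and passing to the limit via Theorem~\ref{theorem:Fenton}.

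Finally, if $K^1$ is singular and strictly concave, the inequalities in the perturbation lemma become strict, so the argument just sketched forbids even $m^{[0,1]}_j(\xx)\ge m^{[0,1]}_j(\yy)$ for all $j$ whenever $\xx\neq\yy$ — this is the intertwining property. Applied to two equioscillation points, each of which would then ``dominate'' the other, it forces them to coincide, so $\ee$ is unique; applied to a minimax point versus $\ee$, and to a maximin point versus $\ee$, it forces each of those extremal points to equal $\ee$, giving uniqueness of the minimax and of the maximin point.
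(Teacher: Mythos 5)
You should first be aware that the paper does not prove Theorem~\ref{theorem:FNR} at all: it is quoted as a known result, "combining the results of \cite{FNR} and \cite{NewFNR}", and is then used as a black box in the proof of Theorem~\ref{theorem:main}. So there is no in-paper proof to compare with, and your attempt has to stand on its own as a proof of the Farkas--Nagy--R\'ev\'esz results.

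Judged that way, it has a genuine gap at its core. Your Stage 1 is essentially fine, except that the inference "for fixed $t$ the map $\yy\mapsto F(\yy,t)$ is upper semicontinuous, hence $\sup_t F(\yy,t)$ is lower semicontinuous" is invalid as stated (a supremum of u.s.c.\ functions has no semicontinuity in general); what rescues it is that $K^1$ extends to a continuous $\RR\cup\{-\infty\}$-valued function on $[-1,1]$ (with $K^1(0)$ the common limit), so $\yy\mapsto F(\yy,t)$ is continuous in the extended sense and the supremum is l.s.c. Stage 2, the reduction showing that an equioscillation point must be a minimax and maximin point once \eqref{theorem:FNR:weakIntertwining} is excluded, is correct and is the easy part. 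But everything that makes the theorem hard --- the non-domination property \eqref{theorem:FNR:weakIntertwining} itself, the existence of an equioscillation point when $J^1$ is u.s.c.\ or $K^1$ is singular, the unconditional equality $M(\Simplex{[0,1]})=m(\Simplex{[0,1]})$, and the strict intertwining and uniqueness under singular strictly concave kernels --- is delegated to a "one-node perturbation lemma" that is never stated, let alone proved, and to an "intermediate-value/degree argument" and "steering the deformation" whose $n$-dimensional bookkeeping you yourself flag as the main obstacle. That bookkeeping (continuity and limit behaviour of the vector $(m^{[0,1]}_0,\ldots,m^{[0,1]}_n)$ including degenerate cells $y_j=y_{j+1}$, infinite local maxima, behaviour at the boundary of the simplex, and the topological argument producing equioscillation) is precisely the content of the two cited papers and cannot be waved through. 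In addition, your route to $M(\Simplex{[0,1]})=m(\Simplex{[0,1]})$ --- "a gap $\mulsegm{[0,1]}(\ww)<\molsegm{[0,1]}(\ww)$ at the minimax point cannot survive" --- does not work in the stated generality: without upper semicontinuity of $J^1$ or singularity of $K^1$ a minimax point need not equioscillate, so this equality requires a separate approximation/limiting argument rather than a local perturbation at $\ww$. As it stands the proposal is an outline of a plausible strategy, not a proof.
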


In conclusion, it is worth noting that the sums of translates approach has found its place not only in problems on a segment, but also on a torus. It all started in 2013 with a conjecture of G.~Ambrus, K.~M.~Ball and T.~Erd\'{e}lyi \cite{AmbrusBallErdelyi} that for any $2\pi$-periodic, even and convex on $(0, 2\pi)$ function~$f$ the expression
\[
\min \limits_{\theta \in [0, 2\pi)} \sum \limits_{j=1}^n f(\theta-\theta_j)
\]
is maximized when the nodes $\theta_1, \ldots, \theta_n$ are uniformly distributed on $[0, 2\pi)$ (it follows that the local maxima of $\sum \limits_{j=1}^n f(\theta-\theta_j)$ are equal). This conjecture was proved by D.~P.~Hardin, A.~P.~Kendall and E.~B.~Saff \cite{HardinKendallSaff} in the same year. In 2018, Farkas, Nagy and R\'{e}v\'{e}sz presented a solution of the minimax problem for functions $\ds F(\yy,t) = K_0(t) + \sum \limits_{j=1}^n K_j(t-y_j)$ \cite{TorusFNR}. They assumed that $K_0, \ldots, K_n: \ \RR \to [-\infty, 0)$ are $2\pi$-periodic functions, strictly concave on $(0, 2\pi)$, and either all are continuously differentiable on $(0, 2\pi)$ or for each $j=0, \ldots ,n$ we have
\[
\lim \limits_{t \uparrow 2\pi} D_+ K_j(t) = 
\lim \limits_{t \uparrow 2\pi} D_- K_j(t) = -\infty, 
\quad \text{or} \quad 
\lim \limits_{t \downarrow 0} D_- K_j(t) = 
\lim \limits_{t \downarrow 0} D_+ K_j(t) = -\infty.
\]
Here $D_{\pm} K_j$ denote the (everywhere existing) one sided derivatives of the function $K_j$. 

In the present paper we prove an analog of Theorem~\ref{theorem:FNR} on $\RR,$ by reducing minimax and maximin problems on the axis to the case of a segment.

\subsection{Formulation of the problems on the real axis}

Among the generalizations of Chebyshev's alternation theorem, there exists a fairly general result for the so-called $T$-systems. However, this approach does not work for Bojanov's case. On the other hand, the sums of translates approach successfully addresses it. Let us discuss this in detail.

Recall that a $T$-system on a set $A$ is a set of continuous real-valued functions $\{u_j(x)\}_{j=0}^n$ defined on $A$ with the property that all the nontrivial generalized polynomials of the form $\ds \sum \limits_{j=0}^n a_j u_j(x)$ have at most $n$ zeros on $A$.

In 1926, S.~N.~Bernstein \cite{Bernstein} obtained the following generalization of Chebyshev's alternation theorem for $T$-systems on the segment $[a,b]$.
\begin{NotOurTheorem} \label{theorem:TSystem}
If $\{u_j(x)\}_{j=0}^n$ is a $T$-system on $[a,b]$, then for any continuous function $f$ on $[a,b]$ there exists a unique polynomial $u_*$ that minimizes the problem of finding
\[
\min_{a_0,\ldots,a_n} \max_{x \in [a,b]} \left| f(x) - \sum \limits_{j=0}^n a_j u_j(x) \right|.
\]
This polynomial is characterized by the existence of $n+2$ points $a \le x_1 < \ldots < x_{n+2} \le b$ such that
\[
(-1)^j \delta (f(x_j)-u_*(x_j)) =
\max \limits_{x \in [a,b]} |f(x)-u_*(x)|, \quad j = 1,\ldots,n+2, \quad \delta = \pm 1.
\]
\end{NotOurTheorem}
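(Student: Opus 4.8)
The plan is to run the classical Chebyshev-type argument, adapted from algebraic polynomials to an arbitrary $T$-system, in four steps: existence of a best approximant, sufficiency of the alternation condition, its necessity, and uniqueness. Throughout I write $U := \operatorname{span}\{u_0,\dots,u_n\}$ and $E(u) := \max_{x \in [a,b]} |f(x) - u(x)|$.

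First I would dispose of existence: the function $(a_0,\dots,a_n) \mapsto E(\sum_j a_j u_j)$ is continuous on $\RR^{n+1}$ and, by linear independence of the $u_j$, coercive, so it attains its infimum at some $u_*$; set $d := E(u_*)$. Next, sufficiency. If some $u \in U$ equioscillates at $t_1 < \dots < t_{n+2}$, i.e. $(-1)^i \delta (f(t_i) - u(t_i)) = E(u)$, and if some $v \in U$ had $E(v) < E(u)$, then at each $t_i$ one gets $|f(t_i) - v(t_i)| < |f(t_i) - u(t_i)|$, whence $u(t_i) - v(t_i) = (f(t_i) - v(t_i)) - (f(t_i) - u(t_i))$ inherits the alternating sign of $-(f(t_i)-u(t_i))$; thus the nontrivial element $u - v$ of $U$ has at least $n+1$ sign changes and hence $\ge n+1$ zeros, contradicting the defining property of a $T$-system. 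So $u$ is optimal.

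For necessity, suppose $u_*$ is optimal but the longest alternating run of extreme points of $f - u_*$ has length $m+1 \le n+1$. A standard compactness argument partitions $[a,b]$ into consecutive closed intervals $I_1,\dots,I_{m+1}$ on each of which $f-u_*$ has constant sign wherever $|f-u_*| = d$, with the sign flipping from $I_k$ to $I_{k+1}$, and with separating points $z_k$ between $I_k$ and $I_{k+1}$ where $|f-u_*| < d$. Since $\{u_j\}$ is a $T$-system and $m \le n$, one can build a nontrivial $q \in U$ that vanishes exactly at $z_1,\dots,z_m$ and changes sign there, so that a suitable $\pm q$ agrees in sign with $f - u_*$ on the extreme set; then $E(u_* + \varepsilon q) < d$ for small $\varepsilon > 0$, a contradiction. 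Hence the alternating run has length $\ge n+2$. Finally, uniqueness: if $u_*$ and $v_*$ are both optimal, convexity of $E$ makes $w := (u_* + v_*)/2$ optimal too, so by necessity $f - w$ alternates at some $t_1 < \dots < t_{n+2}$; since $f(t_i) - w(t_i) = \pm d$ is the average of the two numbers $f(t_i) - u_*(t_i)$ and $f(t_i) - v_*(t_i)$, both in $[-d,d]$, both must equal $\pm d$, so $u_*$ and $v_*$ agree at $n+2$ points and $u_* - v_* \in U$ vanishes identically.

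The hard part is the necessity step. One must (i) convert ``no alternating run of length $n+2$'' into a genuine partition of $[a,b]$ into sign-constant blocks separated by non-extremal points --- delicate because the extreme set $\{|f-u_*|=d\}$ need not be an interval or even connected --- and (ii) produce the correcting element $q \in U$ realizing the prescribed sign pattern using only $m \le n$ sign changes; this is precisely where the $T$-system hypothesis enters, via the interpolation property of $T$-systems (an element of $U$ can be prescribed to vanish at any $\le n$ nodes and have prescribed sign on the complementary arcs). The remaining steps are soft: compactness for existence, the zero count for sufficiency, convexity plus the zero count for uniqueness.
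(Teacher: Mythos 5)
This statement is quoted background (Bernstein's alternation theorem for $T$-systems, cited to \cite{Bernstein}); the paper itself gives no proof of it, so there is nothing to compare against except the classical argument, which is exactly what you reproduce. Your outline — coercivity for existence, the sign-change/zero-count argument for sufficiency, the perturbation argument for necessity, and convexity plus zero-counting for uniqueness — is the standard and correct route, and the existence, sufficiency and uniqueness steps are complete as sketched.

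The one place where your write-up is thinner than the actual work required is the lemma you invoke in the necessity step, namely that for a $T$-system of dimension $n+1$ one can produce $q\in U$ changing sign exactly at $m\le n$ prescribed separating points and keeping the correct strict sign on the whole extremal set. For general (merely continuous, non-extended) Chebyshev systems this is not a routine ``interpolation property'': the natural construction is the determinant $q(x)=\det\bigl(u_j(x);\,u_j(z_i)\bigr)$, which needs exactly $n$ nodes, so when $m<n$ you must insert $n-m$ auxiliary nodes. These have to be placed in pairs inside the gaps where $|f-u_*|<d$ (so that the extra sign changes cancel off the extremal set), and when $n-m$ is odd there is a genuine parity issue, classically resolved by allowing the leftover zero to sit at an endpoint or in a free zone near it, while also checking that no stray zero of the limiting/auxiliary construction lands on the extremal set $\{|f-u_*|=d\}$, which need not avoid the endpoints. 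You correctly flag this as ``where the $T$-system hypothesis enters,'' but as stated it is an assertion, not an argument; to make the proof self-contained you would need to prove this construction lemma (or cite it precisely, e.g.\ Karlin--Studden), since it is the only step that genuinely goes beyond the algebraic-polynomial case. Also note the cosmetic index mismatch $(-1)^i$ versus $t_j$ in the displayed alternation condition, which is inherited from the statement itself.
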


Let $\{u_j(t)\}_{j=0}^n$ be a $T$-system on $\RR$, let $w: \RR \to (0, \infty)$ and $f: \RR \to \RR$ be continuous functions. Assume that
\begin{gather*}
\lim \limits_{t \to \pm \infty} w(t)u_j(t) = 0, \ j=0,\ldots, n, \quad
\lim \limits_{t \to \pm \infty} w(t) f(t) = 0.
\end{gather*}
It is easy to see that $\{w(t)u_j(t)\}_{j=0}^n$ is a $T$-system on $\RR$. Note that if we set $w(-\pi/2)u_j(-\pi/2)=w(\pi/2)u_j(\pi/2) = 0,$ then $\{w(\tan x)u_j(\tan x)\}_{j=0}^n$ is a $T$-system on $[-\pi/2,\pi/2]$. This follows from the fact that the function $\tan$ strictly increases on $(-\pi/2,\pi/2)$ and maps this interval onto $\RR$. Applying Theorem \ref{theorem:TSystem} to the system $\{w(\tan x)u_j(\tan x)\}_{j=0}^n$ with the approximated function $w(\tan x) f(\tan x)$, and then making the substitution $\tan x=t$, we conclude that in the case of $T$-systems, Bernstein's theorem for the weighted minimax problem on $\RR$ is valid. For $f(t) = t^{n+1}$, $\{u_j(t)\}_{j=0}^n = \{t^j\}_{j=0}^n$ we obtain the problem for weighted algebraic polynomials of degree $n+1$.

However, attempts to apply that approach to our problem directly fail quickly because even for the Bojanov problem, the occurring polynomials do not form a vector space, as linear combinations of them can have different root multiplicities. 

The sum of translates approach allows one to obtain a result for Bojanov's case as well. We present a characterization of the extremal polynomial in the weighted Bojanov problem in Corollary \ref{cor:Bojanov}. Moreover, as in Theorem \ref{theorem:FNR}, we deal with almost arbitrary weights.

To formulate our problems precisely, we need some additional preparation.

Let $K: (-\infty, 0) \cup (0, \infty) \to \RR$ be a kernel function and $J: \RR \to \uRR$ be 
 an $n$-field function. 
Let us introduce the set
\[
\Sinf := \{\yy = (y_1, \ldots, y_n) \colon -\infty < y_1 \le y_2 \le \ldots \le y_n < \infty\}.
\]
Let $r_1,\ldots ,r_n > 0.$
Consider the sum of translates function
\[
F(\yy, t) = J(t) + \sum \limits_{j=1}^n r_j K(t-y_j), \quad \yy \in \Sinf, \quad t\in \RR.
\]

\begin{Def}
Let $R >0$ and let $K$ be a kernel function defined on $\RR$. A field function $J$ defined on $\RR$ is said to be $R$-admissible (for $K$) if 
\[
\lim_{|t| \to \infty} \left( J(t) + R K(t) \right) = - \infty.
\]
\end{Def}

\begin{Rem}
Note that if $J$ is $R$-admissible for $K$, then it is also $\widetilde{R}$-admissible for any $\widetilde{R} \in (0, R)$. Without loss of generality, consider positive $t.$ If $C := \lim \limits_{t \to \infty} K(t) \in \RR$, then $\widetilde{R} K(t) \le R (K(t) - C + 1)$ for large $t$, and hence $J$ is $\widetilde{R}$-admissible. Otherwise, due to the concavity, $\lim \limits_{t \to \infty} K(t) = -\infty$. Since $J$ is bounded above by definition, it follows that $J$ is $\widetilde{R}$-admissible.
\end{Rem}

In what follows, we will consider only admissible fields. If we do not specify anything about $R$, it means that we are considering sums of translates with multiplicities $r_1, \ldots ,r_n$, and
\[
R := r_1 + \ldots + r_n.
\]
Denote
\begin{align*}
m_0(\yy) & := \sup \limits_{t \in (-\infty, y_1]} F(\yy, t), \quad
m_n(\yy) := \sup \limits_{t \in [y_n, \infty)} F(\yy, t),\\
m_j(\yy) & := \sup \limits_{t \in [y_j, y_{j+1}]} F(\yy, t), \quad j = 1, \ldots, n-1.
\end{align*}
Similarly to the case of $[a,b]$, we say that $\yy \in \Sinf$ is an equioscillation point on $\mathbb{R}$ if
\[
m_0(\yy) = m_1(\yy) = \ldots = m_n(\yy).
\]
We study the quantities
\begin{gather*}
\mol(\yy) := \max \limits_{j=0,\ldots,n} m_j(\yy) = \sup \limits_{t \in \RR} F(\yy,t), \quad \mul(\yy) := \min \limits_{j=0,\ldots,n} m_j(\yy), \\
M(\Sinf) := \inf \limits_{\yy \in \Sinf} \mol(\yy), \quad m(\Sinf) := \sup \limits_{\yy \in \Sinf} \mul(\yy).
\end{gather*}
It is easy to see that $\mol(\yy)$ is finite for all $\yy \in \Sinf$. 
Indeed, by \eqref{def:admissible}, $F(\yy,\cdot)$ is bounded above outside some segment. Since $J$ and $K$ are bounded above on this segment, $F(\yy,\cdot)$ is bounded above too. Therefore, $F(\yy,\cdot)$ is bounded above on $\RR$, so $\mol(\yy) < \infty$. On the other hand, $\mol(\yy) > -\infty,$ since $F(\yy,\cdot)$ is finite at least at one point by the definition of $J$.

Our goal is to prove the following theorem.
\begin{Teo} \label{theorem:main}
Let $K: (-\infty, 0) \cup (0, \infty) \to \RR$ be a monotone kernel function. Let $n \in \NN, \ r_1, \ldots, r_n > 0$ be arbitrary, and define $R := r_1 + \ldots + r_n$. Assume that $J: \RR \to \uRR$ is an $R$-admissible $n$-field function for $K$. Consider the sum of translates function \eqref{eq:sumOfTranslates}.  Then 
\[
M(\Sinf) = m(\Sinf)
\]
and 
there exists some point $\ww \in \Sinf$ such that
\[
\mol(\ww)=M(\Sinf).
\]
Furthermore, there are no $\xx, \yy$ with finite local maxima $m_j$ such that
\begin{gather} \label{theorem:main:weakIntertwining}
m_j(\xx) > m_j(\yy), \quad j = 0,\ldots,n. 
\end{gather}
If there exists an equioscillation point $\ee$ on $\RR$, then it is the minimax and maximin point, i.e.,
\[
M(\Sinf) = \mol(\ee) = \mul(\ee) = m(\Sinf).
\]
Additionally, 
\begin{enumerate}
\item If $J$ is upper semicontinuous or $K$ is singular, then there exists an equioscillation point on $\RR$.
\item If $K$ is singular and strictly concave, then the equioscillation point is unique. Moreover, the intertwining property holds: the strict inequality \eqref{theorem:main:weakIntertwining} can be replaced by the non-strict one, if $\xx \neq \yy.$ This in particular implies that the equioscillation point is the unique minimax and maximin point.
\end{enumerate}
\end{Teo}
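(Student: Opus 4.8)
The plan is to deduce Theorem~\ref{theorem:main} from the segment result, Theorem~\ref{theorem:FNR}, by showing that the entire minimax/maximin situation on $\RR$ is already visible on one sufficiently large, fixed segment. The bridge is an analogue of the Mhaskar--Rakhmanov--Saff theorem, which I would isolate as a lemma of roughly the following shape: \emph{there is a finite segment $[a,b]$ (large enough to contain enough points of finiteness of $J$) such that (i) $M(\Sinf)=M(\Simplex{[a,b]})$ and $m(\Sinf)=m(\Simplex{[a,b]})$; (ii) for every $\yy\in\Simplex{[a,b]}$ with $\molsegm{[a,b]}(\yy)\le M(\Sinf)$ one has $\mol(\yy)=\molsegm{[a,b]}(\yy)$, and more generally $m_j(\yy)=m^{[a,b]}_j(\yy)$ for all $j$; and (iii) every minimax, maximin, or equioscillation configuration on $\Sinf$ already lies in $\Simplex{[a,b]}$.} Granting such a lemma, Theorem~\ref{theorem:FNR} applied on $[a,b]$ does the rest: it is invariant under affine changes of the variable, and the restriction of a monotone kernel is again a monotone kernel, of an $n$-field again an $n$-field, of a singular (resp.\ strictly concave) kernel again singular (resp.\ strictly concave), and of an upper semicontinuous field again upper semicontinuous, so all hypotheses transfer.

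The heart of the matter is this localization lemma, and I would build it on a uniform coercivity estimate coming from admissibility. Fix $[a,b]$. For $\yy\in\Simplex{[a,b]}$ and $t>b$ each $t-y_j$ lies in $(0,\infty)$ and satisfies $t-y_j\le t-a$, so monotonicity of $K$ on $(0,\infty)$ gives $\sum_j r_j K(t-y_j)\le R\,K(t-a)$, hence $F(\yy,t)\le J(t)+R\,K(t-a)$; symmetrically $F(\yy,t)\le J(t)+R\,K(t-b)$ for $t<a$. Concavity of $K$ keeps $K(t-a)-K(t)$ bounded as $t\to+\infty$ (and $K(t-b)-K(t)$ bounded as $t\to-\infty$), so admissibility, $J(t)+RK(t)\to-\infty$, yields $J(t)+RK(t-a)\to-\infty$ and $J(t)+RK(t-b)\to-\infty$. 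Thus $F(\yy,t)$ is dominated, \emph{uniformly in $\yy\in\Simplex{[a,b]}$}, by a function of $t$ tending to $-\infty$ at $\pm\infty$; this is what confines the active region, and, compared against the size of $F$ at interior test points, it is the starting point for confining near-minimax and near-maximin configurations. Pinning down a single $[a,b]$ that additionally \emph{matches} $M(\Simplex{[a,b]})$ with $M(\Sinf)$ and $m(\Simplex{[a,b]})$ with $m(\Sinf)$ — the genuine MRS content, where classically the endpoints are fixed by equilibrium conditions — is where I expect to spend the most effort; the degenerate cases (coordinates at an endpoint, or a singular kernel producing $m_j=-\infty$) need separate bookkeeping.

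Granting the lemma, the transfer runs as follows. The chain $M(\Sinf)=M(\Simplex{[a,b]})=m(\Simplex{[a,b]})=m(\Sinf)$ is immediate from Theorem~\ref{theorem:FNR}, and the minimax point $\ww$ that theorem produces on $[a,b]$ satisfies $\mol(\ww)=\molsegm{[a,b]}(\ww)=M(\Simplex{[a,b]})=M(\Sinf)$ by part~(ii), so it is a minimax point on $\Sinf$. The absence of the strict intertwining \eqref{theorem:main:weakIntertwining} I would prove directly, \emph{without} the lemma: if $\xx,\yy\in\Sinf$ had finite local maxima with $m_j(\xx)>m_j(\yy)$ for all $j$, pick $N$ so large that $[-N,N]$ contains all coordinates of $\xx$ and $\yy$ and that the left and right tails of both $F(\xx,\cdot)$ and $F(\yy,\cdot)$ outside $[-N,N]$ stay below the corresponding finite boundary maxima $m_0(\cdot),m_n(\cdot)$ (possible because for a fixed configuration those tails go to $-\infty$, again by admissibility). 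Then $m^{[-N,N]}_j(\xx)=m_j(\xx)$ and $m^{[-N,N]}_j(\yy)=m_j(\yy)$ for all $j$, contradicting the no-intertwining part of Theorem~\ref{theorem:FNR} on $[-N,N]$. The same truncation shows that an equioscillation point $\ee$ on $\RR$ is an equioscillation point on $[-N,N]$ for $N$ large, so its common value equals $M(\Simplex{[-N,N]})=m(\Simplex{[-N,N]})$, and combined with part~(i) (taking $N$ past the MRS length) this gives $\mol(\ee)=\mul(\ee)=M(\Sinf)=m(\Sinf)$.

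Finally, for the two additional items: under the hypothesis of~(1), Theorem~\ref{theorem:FNR}(1) on $[a,b]$ supplies an equioscillation point $\ee\in\Simplex{[a,b]}$, which is also the minimax point there, so $\molsegm{[a,b]}(\ee)=M(\Simplex{[a,b]})=M(\Sinf)$; part~(ii) then forces $m_j(\ee)=m^{[a,b]}_j(\ee)$ for all $j$, so $\ee$ equioscillates on $\RR$ and the previous paragraph applies. Under the hypothesis of~(2), singularity and strict concavity pass to the restriction, so Theorem~\ref{theorem:FNR}(2) on each large $[-N,N]$ gives uniqueness of the equioscillation point and the non-strict intertwining there; the truncation argument upgrades both to $\RR$ (for uniqueness, take $N$ large enough to preserve the local maxima of two given equioscillation points simultaneously; for the non-strict intertwining, the tail estimate applies verbatim), and, exactly as on the segment, the non-strict intertwining implies the equioscillation point is the unique minimax and maximin point.
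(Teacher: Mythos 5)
Your overall route is the paper's: localize the problem to one large segment and then quote Theorem~\ref{theorem:FNR}, which survives the affine change of variable. The transfer steps you actually carry out are sound and essentially coincide with the paper's: your truncation to a configuration-dependent $[-N,N]$ for the no-strict-intertwining claim is the role played by Lemma~\ref{lemma:M_L}, and your passage of equioscillation points from the segment to $\RR$ is Lemma~\ref{lemma:equioscillation}. The genuine gap is that your ``localization lemma'' --- items (i) and (iii) --- is precisely the new content of the paper, and you leave it unproven. The only estimate you supply, $F(\yy,t)\le J(t)+R\,K(t-a)$ for $t>b$, is restricted to $\yy\in\Simplex{[a,b]}$, i.e.\ to configurations already confined to the segment; it says nothing about $\yy\in\Sinf$ with nodes far outside $[a,b]$, which is exactly what must be controlled to obtain $M(\Sinf)=M(\Simplex{[a,b]})$ and $m(\Sinf)=m(\Simplex{[a,b]})$, since the extremization over $\Sinf$ ranges over unconfined configurations. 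The paper handles the minimax side by Theorem~\ref{theorem:MRS} (valid for \emph{all} $\yy\in\Sinf$, proved via a nontrivial choice of test points $z_i$ with $\delta$-separated neighborhoods and Lemma~\ref{lemma:concaveShift}) together with Lemma~\ref{lemma:minimax:compactRoot} (pushing outside nodes to $\pm\ell$ does not increase $\mol$); and the maximin side by a genuinely different argument, Lemma~\ref{lemma:maximin:compactRoot}, which compares $F(\yy,\cdot)$ on the appropriate tail with the configuration collapsed to the weighted average $c(\yy)$ via Jensen's inequality to show that a far-out node forces $m_0$ or $m_n$ below a fixed benchmark. None of this is in your sketch; in particular the maximin confinement does not follow from monotonicity/truncation reasoning of the kind you indicate, and your (iii) presupposes the existence of minimax and maximin configurations on $\Sinf$, which is itself part of what must be proved.

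A second, smaller issue is your strengthened (ii): the claim that on a single fixed $[a,b]$ one has $m_j(\yy)=m^{[a,b]}_j(\yy)$ for \emph{all} $j$ whenever $\molsegm{[a,b]}(\yy)\le M(\Sinf)$ is delicate and nowhere established. The outer maxima $m_0,m_n$ are suprema over infinite tails and can exceed their segment counterparts (e.g.\ for nodes at or near the endpoints), while the MRS property only controls the global maximum $\mol(\yy)$, not the individual $m_j$. The paper deliberately avoids needing any such single-segment statement by a two-scale scheme: one fixed $\ell$ exceeding both the MRS number $q$ and the maximin confinement bound $L$ for the equalities $M(\Sinf)=m(\Sinf)$ and the existence of $\ww$, and separate, configuration-dependent segments $[-M_C,M_C]$ from Lemma~\ref{lemma:M_L} for identifying local maxima in the intertwining argument. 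To complete your proposal you must either prove your (i)--(iii) --- in effect re-deriving Theorem~\ref{theorem:MRS}, Lemma~\ref{lemma:minimax:compactRoot} and Lemma~\ref{lemma:maximin:compactRoot} --- or restructure along the paper's two-scale lines.
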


\section{Simple lemmas}
Let us formulate an auxiliary assertion, an equivalent description of concavity, which will be very useful for us below. This statement is well-known, its proof can be found e.g. in \cite[Lemma 10]{Rankin}, but we still give it in a more convenient formulation for us.
\begin{Lem} \label{lemma:concaveShift}
Let $g$ be a concave function on a segment. If $x$ belongs to this segment and $k, h > 0$ are such that $x + k + h$ also belongs to it, then
\[
g(x+k+h)-g(x+h) \le g(x+k)-g(x).
\]
\end{Lem}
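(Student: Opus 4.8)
The plan is to use nothing beyond the definition of concavity, rewriting the two ``middle'' points $x+h$ and $x+k$ as convex combinations of the two extreme points $x$ and $x+k+h$. First I would note that since $k,h>0$ we have $x<x+h<x+k+h$ and $x<x+k<x+k+h$, so both $x+h$ and $x+k$ lie in the segment on which $g$ is concave, and moreover each is a genuine convex combination of $x$ and $x+k+h$.

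A one-line computation identifies the weights: $x+h = \frac{k}{k+h}\,x + \frac{h}{k+h}\,(x+k+h)$ and $x+k = \frac{h}{k+h}\,x + \frac{k}{k+h}\,(x+k+h)$. The key structural observation is that the two weight pairs are complementary (the coefficient of $x$ in the first equals the coefficient of $x+k+h$ in the second, and vice versa), and of course all four coefficients lie in $[0,1]$ and each pair sums to $1$. Applying the concavity inequality $g(\lambda u + (1-\lambda)v)\ge \lambda g(u)+(1-\lambda)g(v)$ to each of these two representations gives two lower bounds for $g(x+h)$ and $g(x+k)$ respectively.

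Adding those two inequalities, the right-hand side telescopes: the coefficients of $g(x)$ add to $1$ and so do the coefficients of $g(x+k+h)$, leaving $g(x+h)+g(x+k)\ge g(x)+g(x+k+h)$. Rearranging this last inequality yields exactly $g(x+k+h)-g(x+h)\le g(x+k)-g(x)$, which is the assertion. I do not expect any real obstacle here: the statement is the familiar fact that difference quotients of a concave function are nonincreasing, and the only thing to verify is the admissibility of the chosen convex combinations. An alternative, equally short route would be to invoke the three-chord (slope monotonicity) characterization of concavity applied to the configuration $x < x+\min(h,k) \le x+\max(h,k) < x+k+h$, but the symmetric two-inequality argument above avoids a case distinction between $h\le k$ and $k\le h$ and is cleaner to present.
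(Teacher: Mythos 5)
Your proposal is correct and coincides with the paper's own argument: both express $x+k$ and $x+h$ as convex combinations of $x$ and $x+k+h$ with the complementary weights $\frac{k}{k+h}$ and $\frac{h}{k+h}$, apply the definition of concavity to each, and add the two inequalities to obtain $g(x+k+h)+g(x)\le g(x+k)+g(x+h)$. No gaps; nothing further to note.
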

\begin{proof}
By definition of concavity, for the point $x+k$ we have
\[
\dfrac{k}{k+h} g(x+k+h) + \dfrac{h}{k+h} g(x) \le g(x+k),
\]
and for $x+h$
\[
\dfrac{h}{k+h} g(x+k+h) + \dfrac{k}{k+h} g(x) \le g(x+h).
\]
Summing these inequalities, we obtain
\[
g(x+k+h)+g(x) \le g(x+k)+g(x+h),
\]
and the lemma is proved.
\end{proof}

\begin{Lem}
If $J$ is admissible, then for any $\yy \in \Sinf$ 
\begin{gather} \label{def:admissible}
\lim \limits_{|t| \to \infty} F(\yy,t) = -\infty.
\end{gather}
\end{Lem}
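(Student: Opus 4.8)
The plan is to reduce the statement to the very definition of admissibility, namely $\lim_{|t|\to\infty}\bigl(J(t)+RK(t)\bigr)=-\infty$ with $R=r_1+\dots+r_n$, by showing that for each fixed $\yy\in\Sinf$ the difference $\sum_{j=1}^n r_jK(t-y_j)-RK(t)$ is bounded above once $|t|$ is large. Granting this, one writes
\[
F(\yy,t)=\bigl(J(t)+RK(t)\bigr)+\Bigl(\sum_{j=1}^n r_jK(t-y_j)-RK(t)\Bigr)\le\bigl(J(t)+RK(t)\bigr)+C(\yy)
\]
for all large $|t|$, and the right-hand side tends to $-\infty$, which is \eqref{def:admissible}.

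To obtain the uniform upper bound I would compare each translate $K(t-y_j)$ with $K(t)$ term by term, treating $t\to+\infty$ and $t\to-\infty$ separately. For $t\to+\infty$, fix any $t_0>\max\{0,y_n\}$; then for every $t\ge t_0$ all the arguments $t-y_j$ lie in $(0,\infty)$, where $K$ is concave and increasing. If $y_j\ge0$ then $t-y_j\le t$, so monotonicity gives $K(t-y_j)\le K(t)$ at once. If $y_j<0$ then $t-y_j>t$, and I would invoke Lemma~\ref{lemma:concaveShift} for $K$ on the segment $[t_0,t-y_j]\subset(0,\infty)$ with $x=t_0$, $k=-y_j$, $h=t-t_0$ to get $K(t-y_j)-K(t)\le K(t_0-y_j)-K(t_0)$, a constant free of $t$. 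Summing with the weights $r_j$ gives $\sum_j r_jK(t-y_j)\le RK(t)+\sum_{j:\,y_j<0}r_j\bigl(K(t_0-y_j)-K(t_0)\bigr)$ for all $t\ge t_0$, as needed. The case $t\to-\infty$ is completely symmetric, now using that $K$ is concave and decreasing on $(-\infty,0)$: one fixes $t_1<\min\{0,y_1\}$, notes $K(t-y_j)\le K(t)$ when $y_j\le0$, and for $y_j>0$ applies Lemma~\ref{lemma:concaveShift} on the segment $[t-y_j,t_1]\subset(-\infty,0)$ (equivalently, after the reflection $t\mapsto-t$, on $(0,\infty)$) to bound $K(t-y_j)-K(t)$ by the constant $K(t_1-y_j)-K(t_1)$.

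I do not anticipate a genuine obstacle; the only care needed is the bookkeeping of signs and domains — one must be sure that every point fed into Lemma~\ref{lemma:concaveShift} lies on a single interval of concavity of $K$, either entirely in $(0,\infty)$ or entirely in $(-\infty,0)$, which is precisely why $t_0$ is taken past $\max\{0,y_n\}$ and $t_1$ past $\min\{0,y_1\}$. It is worth remarking that the monotonicity of the kernel is used only in the ``easy'' half of the comparison (the inequality $K(t-y_j)\le K(t)$); for a merely concave, non-monotone kernel this step could fail, and one would instead have to argue through the behaviour of the one-sided derivatives of $K$ near $\pm\infty$.
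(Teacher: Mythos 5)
Your argument for the monotone case is correct and is essentially the paper's own: the same decomposition into $\bigl(J(t)+RK(t)\bigr)$ plus weighted differences $K(t-y_j)-K(t)$, the same use of Lemma~\ref{lemma:concaveShift} with a fixed base point to bound the differences that go the ``wrong way'' (the paper takes the base point $x=1$ rather than $t_0>\max\{0,y_n\}$, and packages the conclusion as $F(\yy,t)=\sum_j\frac{r_j}{R}\bigl(J(t)+RK(t-y_j)\bigr)$, but these are cosmetic differences), and the same symmetric treatment of $t\to-\infty$.

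However, there is a genuine gap relative to the statement as it stands: the lemma is asserted for an arbitrary kernel function $K$, with no monotonicity hypothesis (admissibility of $J$ is defined for any kernel on $\RR$), and the paper's proof explicitly treats the non-monotone case. You correctly flag that your comparison $K(t-y_j)\le K(t)$ needs monotonicity, but the problem is worse than a missing ``easy half'': for a non-monotone kernel the quantity $\sum_j r_jK(t-y_j)-RK(t)$ need not be bounded above at all (take $K$ behaving like $-t^2$ on $(0,\infty)$; then for $y_j>0$ one has $K(t-y_j)-K(t)=2ty_j-y_j^2\to+\infty$), so your reduction to \eqref{def:admissible} breaks down, and the remedy you hint at (one-sided derivatives of $K$ near $\pm\infty$) is not the needed one. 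The missing case is in fact short and does not use admissibility: if $K$ is not non-decreasing on $(0,\infty)$, concavity forces $K(t)\to-\infty$ as $t\to+\infty$, hence every translate $K(t-y_j)\to-\infty$, and since a field function $J$ is bounded above by definition, $F(\yy,t)\to-\infty$ directly (and symmetrically as $t\to-\infty$). Adding this two-line case would make your proof complete and bring it in line with the paper's.
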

\begin{proof}
Let us prove the lemma for $t \to \infty$. If $t \to -\infty$, then we can consider $\widetilde{J}(t) = J(-t)$ and $\widetilde{K}(t) = K(-t)$ and apply what is proved for $t \to \infty$.

Firstly, let us show that for any $y \neq 0$ 
\[
\lim \limits_{t \to \infty} J(t) + RK(t-y) = -\infty.
\]

We consider two cases. 
First, let $K$ be a kernel which is
monotonically increasing on $(0,\infty)$.
We have
\[
J(t) + R K(t-y) = J(t) + R K(t) + R (K(t-y)-K(t)).
\]
If $K(t-y)-K(t) \le 0,$ then, obviously, since $J$ is admissible, we obtain our statement. Now let $K(t-y)-K(t) > 0.$ Hence, by monotonicity of $K$, for large $t$ we have that $t-y > t > 1.$ Here applying Lemma \ref{lemma:concaveShift} with $x = 1, \ k = -y, \ h = t-1,$ we obtain that
\[
K(t-y)-K(t) \le K(1-y) - K(1).
\]
This estimate does not depend on $t.$ Hence, using the admissibility of $J$, we have that
\[
J(t) + R K(t-y) \le J(t) + R K(t) + R (K(1-y) - K(1))\to -\infty, \quad t \to \infty.
\]

Now suppose that $K$ is not monotone on $(0, \infty)$. Therefore, $\lim \limits_{t \to \infty} K(t) = -\infty$. By definition of a field, $J$ is bounded above, and we have that $J(t) + R K (t-y) \to -\infty$ as $t \to \infty$.

Finally, we have that
\[
F(\yy,t) = J(t) + \sum \limits_{j=1}^n r_j K(t-y_j) = \sum \limits_{j=1}^n \dfrac{r_j}{R} \left( J(t) + R K(t-y_j) \right) \to -\infty, \quad t \to \infty.
\]
\end{proof}

\section{Mhaskar-Rakhmanov-Saff theorem for sums of translates \\ on the real axis}
The key in our proof is a reduction to the problem on a segment. For this, we work out an analogue of the Mhaskar-Rakhmanov-Saff theorem.
In 1985, H.~N.~Mhaskar and E.~B.~Saff proved the following theorem \cite{MRS}.

\begin{NotOurTheorem}
Let $w: \mathbb{R} \to [0,\infty)$ be a function with support $\Sigma$ such that
\begin{enumerate}
\item $\Sigma$ has positive logarithmic capacity.
\item The restriction of $w$ to $\Sigma$ is continuous on $\Sigma.$
\item The set $Z := \{x \in \Sigma:\ w(x) = 0\}$ has logarithmic capacity zero. 
\item If $\Sigma$ is unbounded, then $|x| w(x) \to 0$ as $|x| \to \infty, \ x \in \Sigma.$
\end{enumerate}
Then there exists a compact set $S_w$ such that for any polynomial $p$ of degree $n$ if
\[
|w^n(x)p(x)| \le M
\]
holds quasi-everywhere on $S_w$, i.e., except for a set of logarithmic capacity zero, then it holds quasi-everywhere on $\Sigma$. 
\end{NotOurTheorem}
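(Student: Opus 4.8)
The plan is to obtain this statement from the standard existence--uniqueness--regularity theory of the equilibrium measure in an external field, in the form given by Mhaskar and Saff. I would set $Q := -\log w$ on $\Sigma$, so that hypotheses (1)--(3) turn $Q$ into an admissible external field while (4) is exactly the growth bound $Q(x) - \log|x| \to +\infty$ as $|x| \to \infty$. First, by the fundamental theorem of weighted potential theory the weighted energy $I_Q(\mu) := \iint \log\frac{1}{|s-t|}\,d\mu(s)\,d\mu(t) + 2\int Q\,d\mu$ is minimised over unit Borel measures on $\Sigma$ at a unique measure $\mu_w$; hypothesis (4) forces $S_w := \operatorname{supp}\mu_w$ to be compact, and since $Z$ is polar, $\mu_w(Z)=0$. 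One has the Frostman equilibrium relations, with a Robin-type constant $F_w$,
\[
U^{\mu_w} + Q \ge F_w \ \text{ q.e.\ on }\Sigma, \qquad U^{\mu_w} + Q = F_w \ \text{ q.e.\ on }S_w,
\]
where $U^\mu(x) := \int \log\frac{1}{|x-t|}\,d\mu(t)$, continuity of $w$ on $\Sigma$ being used to control the exceptional set of the second relation.

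Next I would pass from $p$ to its normalised zero counting measure. If $p(x) = a\prod_{j=1}^n (x-z_j)$ and $\nu := \frac1n\sum_{j=1}^n \delta_{z_j}$, then $\frac1n\log|w^n(x)p(x)| = -\big(U^\nu(x)+Q(x)\big) + \frac1n\log|a|$, so the hypothesis ``$|w^n p|\le M$ q.e.\ on $S_w$'' reads $U^\nu + Q \ge \gamma$ q.e.\ on $S_w$ with $\gamma := \frac1n\log(|a|/M)$, and the conclusion we want is precisely the same inequality q.e.\ on all of $\Sigma$. Subtracting the equilibrium equality $U^{\mu_w}+Q = F_w$ valid on $S_w$ gives $U^{\mu_w} - U^\nu \le F_w - \gamma$ q.e.\ on $S_w = \operatorname{supp}\mu_w$, hence $\mu_w$-a.e.\ (q.e.\ implies $\mu_w$-a.e.\ since $\mu_w$ does not charge polar sets).

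The crucial step is the \textbf{principle of domination}: as $\mu_w$ and $\nu$ are positive measures of equal total mass, $\mu_w$ has finite logarithmic energy and compact support, and $U^{\mu_w} \le U^\nu + (F_w-\gamma)$ holds $\mu_w$-a.e., the inequality $U^{\mu_w}(x) - U^\nu(x) \le F_w - \gamma$ extends to every $x \in \mathbb{C}$. Adding $Q(x)$ and using the global lower bound $U^{\mu_w}+Q \ge F_w$ q.e.\ on $\Sigma$ now yields $U^\nu(x)+Q(x) \ge \gamma$ for q.e.\ $x\in\Sigma$, which is exactly $|w^n(x)p(x)| \le M$ q.e.\ on $\Sigma$.

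I expect the real work — and the only serious obstacle — to be the potential-theoretic package underpinning step one: constructing $\mu_w$ by minimising $I_Q$ (lower semicontinuity, tightness), deriving the Frostman variational inequalities, and proving $S_w$ is compact, where each of the four hypotheses is genuinely needed. Two smaller points: the degree-$n$ assumption (for $\deg p = k<n$ I would either send the missing zeros to infinity and pass to the limit, or run the argument with $\nu$ of mass $k/n$ against $\frac kn\mu_w$ so the mass comparison in the domination principle still holds), and the quasi-everywhere bookkeeping — upgrading ``q.e.\ on $S_w$'' to ``$\mu_w$-a.e.'' and carrying a single polar exceptional set through — which is routine given hypotheses (2) and (3).
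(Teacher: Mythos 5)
This statement is not proved in the paper at all: it is the classical Mhaskar--Saff theorem, quoted as background with a pointer to the original article and to Saff--Totik, and the paper only remarks that its proof rests on minimizing the weighted logarithmic energy. Your outline --- equilibrium measure $\mu_w$ with compact support $S_w$, the Frostman-type variational inequalities, rewriting $\frac1n\log|w^np|$ via the normalized zero-counting measure, and extending the bound from $S_w$ to $\Sigma$ by the principle of domination --- is precisely that standard argument, so it is correct and follows essentially the same route as the cited source.
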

The key to proving this theorem is to solve the weighted potential theory problem of minimizing logarithmic energy. The support of the extremal measure for this problem is $S_w$. The minimization of logarithmic energy has physical significance and is related to the potential energy of charged particles. The study of this problem goes back
to Gauss. In~1935, O.~Frostman studied the weighted problem, assuming that $w$ is continuous and superharmonic ~\cite{Frostman}. Mhaskar and Saff adapted Frostman's approach in their proof. In addition to the original work \cite{MRS}, more details can be found in \cite[p.~153]{Saff-Totik}.

E.~A.~Rakhmanov independently explored the connection between potential theory and the minimax problem in 1982 \cite{Rakhmanov}. Rakhmanov was interested in the asymptotic properties of orthogonal polynomials with Hermite and Laguerre weights on the real axis, including their logarithmic asymptotics and distribution of zeros. In addition, Rakhmanov investigated the continuous analogue of the minimax problem for polynomials, where logarithmic potentials are considered instead of polynomials. He showed that such a problem has a unique solution. Throughout his work, there is a clear connection between polynomials and potentials. Rakhmanov is always mentioned alongside Mhaskar and Saff because their ideas overlap.

It is natural to ask about finding $S_w$. This question has been extensively studied when $w$ is an even differentiable function and $x \cdot \dfrac{w'(x)}{w(x)}$ is negative and decreasing on $(0, \infty).$ In this case, $S_w$ is a symmetric segment, its upper bound is called the Mhaskar-Rakhmanov-Saff number and can be found as the solution of a certain integral equation~\cite[p.~216]{Saff-Totik}.

The minimal set on which the absolute values of any weighted polynomial attain their maxima is called the minimal essential set. It is important to note that the set $S_w$ from the Mhaskar-Rakhmanov-Saff theorem is also the minimal essential set \cite{Gonchar}.

In our paper, we do not use the approach of potential theory (and the concept of logarithmic capacity, respectively), and our inequalities hold everywhere. 

The following result, essential to our argument in proving Theorem \ref{theorem:main}, was communicated to us by Szil\'{a}rd Gy. R\'{e}v\'{e}sz. We present it here with his permission.

\begin{Teo} (Sz.~Gy.~R\'{e}v\'{e}sz) \label{theorem:MRS}
Assume that $K$ is a monotone kernel, $R>0, \ n \in \NN$ and $J$ is an $R$-admissible $n$-field for $K.$ Then there exists $q=q(K,J,R)$ such that for any positive $r_1, \ldots, r_n$ with $\ds \sum \limits_{j=1}^n r_j \le R$ and for~all $\yy \in \Sinf$ we have
\[
\mol(\yy) = \sup \limits_{t \in [-q,q]} F(\yy, t).
\]

\end{Teo}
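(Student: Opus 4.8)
The plan is to find a single compact interval $[-q,q]$ outside of which $F(\yy,\cdot)$ can never exceed its maximum on $[-q,q]$, uniformly over all admissible choices of $\yy$ and of multiplicities summing to at most $R$. The starting point is the previously established fact that $\mol(\yy)$ is finite and, more importantly, that $F(\yy,\cdot)$ is finite at some point; combined with admissibility we will want a \emph{uniform} lower bound on the value $\sup_{[-q,q]} F(\yy,\cdot)$ and a \emph{uniform} upper bound on $F(\yy,t)$ for $|t|$ large. For the lower bound: since $J$ is a field it is finite at more than $n$ points, so there is a point $t_0$ and a small closed interval $I_0 = [t_0-\delta, t_0+\delta]$ on which $J$ is bounded below by some constant $c_0 > -\infty$ (using concavity of $J$, which follows since... actually $J$ need only be a field, so instead pick $t_0$ to be one of the finitely-valued points and note $J(t_0)$ is a fixed finite number). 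Then for \emph{any} $\yy$, at least one of the gaps between consecutive $y_j$'s (or the two outer rays) has length at least, say, $2q/(n+1)$ once we know all the $y_j$ lie in a bounded region — but they need not, so this must be handled differently.

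The cleaner route, which I expect the author takes, is: first show that the minimax point (or any near-minimizer) has all coordinates in a fixed bounded set, then localize. Concretely, I would argue that if some $y_j$ escapes to $\pm\infty$ then $\mol(\yy)\to+\infty$ (or at least stays bounded below by a quantity that exceeds $M(\Sinf)$), so it suffices to restrict attention to $\yy$ in a compact subsimplex $\Simplex{[-p,p]}$; this uses admissibility, Lemma~\ref{lemma:concaveShift}, and the fact that $K$ is bounded above near its finite points. Once $\yy \in \Simplex{[-p,p]}$, the function $F(\yy,\cdot)$ on $[p,+\infty)$ is dominated, via Lemma~\ref{lemma:concaveShift} exactly as in the proof of \eqref{def:admissible}, by $J(t) + R K(t-p) + \text{const}$, which tends to $-\infty$ \emph{at a rate independent of} $\yy$; hence there is $q \ge p$ with $F(\yy,t) < c_0 \le \sup_{[-q,q]} F(\yy,\cdot)$ for all $t \ge q$ and, symmetrically, for all $t \le -q$. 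But we need the inequality $\mol(\yy)=\sup_{[-q,q]}F$ for \emph{all} $\yy\in\Sinf$, not just minimizers — so the bounded-region reduction has to be replaced by a genuinely uniform tail estimate valid for every $\yy$.

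Therefore the actual key step is a uniform tail bound: for $\yy\in\Sinf$ arbitrary and $t$ with $t > y_n$ large, write $F(\yy,t) = \sum_j \frac{r_j}{R}(J(t)+RK(t-y_j))$ and bound each summand. The term with $y_j \le 1$ is handled by Lemma~\ref{lemma:concaveShift} with shift $\ge 0$ to get $K(t-y_j)-K(t) \le K(1-y_j)-K(1)$ — but this is not bounded in $y_j$ as $y_j\to-\infty$, so one instead uses monotonicity: for $t$ large and $y_j < t$, $K(t-y_j) \le K(t-y_n)$ if $t-y_j \ge t-y_n > 0$... wait, $K$ increasing on $(0,p)$ gives $K(t-y_j)\le K(t-y_n)$ only when $t-y_j \le t-y_n$, i.e. $y_j\ge y_n$, the wrong direction. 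The right move: $K$ increasing on the positive side means $K(s)$ is largest for large $s$, so $K(t-y_j)$ with $y_j$ very negative could be large; but $K$ is concave on $(0,\infty)$ and bounded above there (a concave increasing function on $(0,\infty)$ has a finite supremum, $K(\infty)$, possibly $+\infty$ — no: concave and increasing allows $K(t)\to+\infty$ sublinearly). So one needs the \emph{admissibility} to beat this: $J(t)+RK(t) \to -\infty$ controls the worst case $y_j = 0$. For $y_j$ between $0$ and some fixed $p$, Lemma~\ref{lemma:concaveShift} bounds $K(t-y_j)-K(t)$ by a constant depending only on $p$; for $y_j < 0$, monotonicity plus concavity on $(0,\infty)$ gives $K(t-y_j) \le K(t) + (-y_j)\cdot D_+K(\text{something near }0)$... the clean statement is $K(t-y_j)-K(t)\le K(t_*-y_j)-K(t_*)$ for any $0<t_*<t$ by Lemma~\ref{lemma:concaveShift}, still unbounded.

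So I now believe the correct and only viable path is the two-stage one: \textbf{Step 1.} Prove $\mol$ is lower semicontinuous / has bounded sublevel sets on $\Sinf$, so that the minimax $M(\Sinf)$ is attained and every $\yy$ with $\mol(\yy)$ close to $M(\Sinf)$ lies in some fixed $\Simplex{[-p,p]}$; more precisely, show directly that $\inf_{\|\yy\|\ge p}\mol(\yy)\to+\infty$ as $p\to\infty$ using admissibility and Lemma~\ref{lemma:concaveShift} applied near the escaping coordinate. \textbf{Step 2.} This doesn't immediately give the theorem for all $\yy$, but the theorem as stated only needs: there is $q$ such that $\mol(\yy) = \sup_{[-q,q]} F(\yy,\cdot)$ for every $\yy$. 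Reread — yes, every $\yy$. Hence Step 1 alone is insufficient and one genuinely needs the uniform tail estimate. Here is the resolution: for $t \ge y_n$ (the only region where $t$ can be large positive while being outside $[-q,q]$), use that $F(\yy,t) \le F(\yy, y_n) $ is \emph{false} in general, but $m_n(\yy) = \sup_{[y_n,\infty)} F(\yy,\cdot)$, and by concavity of $t\mapsto F(\yy,t)$ on $(y_n,\infty)$ (sum of concave functions $K(t-y_j)$ on their last piece plus concave... no, $J$ need not be concave). Given the difficulty, \textbf{the main obstacle is precisely obtaining a tail bound on $F(\yy,t)$ for large $|t|$ that is uniform over all $\yy\in\Sinf$}; I expect the author resolves it by choosing $q$ via the admissibility condition evaluated at the configuration $y_1=\dots=y_n$ (all mass at one point) being the extremal worst case after applying Lemma~\ref{lemma:concaveShift} to slide each $K(\cdot-y_j)$ against $K(\cdot)$, yielding $F(\yy,t) \le J(t)+RK(t) + C$ for $|t|$ large with $C=C(K,R)$ independent of $\yy$ once $t$ exceeds a threshold also independent of $\yy$ — this works when all $y_j \ge 0$; for $y_j<0$ one instead exploits that $t-y_j > t$ and $K$ increasing is compensated by the concavity estimate $K(t-y_j) \le K(t) + y_j^- \cdot K'(t)$ with $K'(t)\to 0$ (from concavity, $K'$ decreases, and boundedness-above forces $K'(t)\to 0$ when $K$ is bounded; when $K$ is unbounded one uses monotone-kernel structure more carefully). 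In short: the proof is a careful case analysis splitting $[-q,q]^c$ into $t>q$ and $t<-q$, and within each, splitting the nodes by sign, applying Lemma~\ref{lemma:concaveShift} and monotonicity to reduce to the admissibility hypothesis $\lim_{|t|\to\infty}(J(t)+RK(t))=-\infty$, and finally choosing $q$ so large that $J(t)+RK(t)+C < \min(J(t_0), \text{some fixed finite value attained in }[-q,q])$; the constant $C$ and the threshold are extracted uniformly, which is the crux.
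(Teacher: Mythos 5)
Your proposal correctly identifies the crux (a tail estimate for $F(\yy,t)$ at large $|t|$ that is uniform over \emph{all} $\yy\in\Sinf$, not just near-minimizers), but the mechanism you finally propose for it does not work, and this is a genuine gap rather than a detail. You claim a bound of the form $F(\yy,t)\le J(t)+RK(t)+C$ with $C=C(K,R)$ independent of $\yy$. This is false for the most important kernels: take $K=\log|\cdot|$ and let the nodes run to $-\infty$; then $K(t-y_j)-K(t)=\log\bigl((t-y_j)/t\bigr)\to+\infty$ for fixed large $t$, so no $\yy$-independent constant $C$ exists. Your attempted repair $K(t-y_j)\le K(t)+y_j^{-}\,K'(t)$ fares no better, since $y_j^{-}K'(t)$ is unbounded in $y_j$ for fixed $t$ (e.g.\ $|y_j|/t$ for the logarithm), and you explicitly leave the unbounded-$K$ case unresolved. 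For the same reason your earlier ``uniform lower bound on $\sup_{[-q,q]}F$ plus uniform absolute upper bound on the tail'' strategy cannot be completed: no absolute upper tail bound uniform in $\yy$ exists, and also $F(\yy,t_0)$ at a single finiteness point $t_0$ of $J$ can be $-\infty$ when $K$ is singular and a node sits at $t_0$.

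The missing idea in the paper's proof is that the estimate must be \emph{relative to the same configuration} $\yy$: one fixes $n+1$ points $z_0<\dots<z_n$ with $J(z_i)>-\infty$ and, by pigeonhole (there are only $n$ nodes), for each $\yy$ selects one $z=z_i$ at distance at least $\delta$ from every node. For $t$ large one then bounds, node group by node group (nodes left of $-b$, in $[-b,b]$, right of $b$), using Lemma \ref{lemma:concaveShift} and monotonicity, in such a way that the potentially unbounded quantities $K(z-y_j)$ for far-away nodes are \emph{retained} and reassembled into $F(\yy,z)$, which is automatically $\le\mol(\yy)$; the terms $-r_jK(z-y_j)$ left over for the remaining nodes are controlled by the constant $-\min\{K(-\delta),K(\delta)\}$ thanks to the $\delta$-separation, and what remains is essentially $J(t)+\bigl(\sum_{y_j\le t}r_j\bigr)K(1+t+b)+\mathrm{const}$, driven to $-\infty$ by admissibility uniformly in $\yy$ (with a small extra argument when $\lim_{t\to\infty}K(t)\le 0$ is finite, to pass from $\sum_{y_j\le t}r_j$ to $R$). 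This yields $F(\yy,t)\le F(\yy,z)\le\mol(\yy)$ for all $|t|>q$ with $q$ depending only on $K,J,R$, which is exactly the statement. Your sketch never compares $F(\yy,t)$ with $F(\yy,z)$ at a pigeonhole-selected interior point, and without that device the uniformity over $\yy$ you correctly recognized as the crux cannot be obtained.
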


\begin{proof}
Assume that the set of points where $J$ is finite is unbounded. Otherwise, the statement is trivial.

Fix $n+1$ arbitrary points $z_0 < z_1 < \ldots < z_n$ such that $J(z_j) > -\infty$ and $z_{j+1}-z_j > 2, \ j=0,\ldots,n$.
Take $\yy \in \Sinf$. There are $n+1$ non-intersecting $1$-neighborhoods of $z_j, \ j=0,\ldots,n$. Take $\yy \in \Sinf$. By Dirichlet's principle, there is $z_i$ such that $\min \limits_{j=1,\ldots,n} |y_j-z_i| > 1.$ Set $z := z_i$.

Let us estimate
\[
F(\yy,t) = J(t) + \sum \limits_{y_j < z} r_j K(t-y_j) + \sum \limits_{y_j > z} r_j K(t-y_j)
\]
for large $t > z+1.$

If $y_j > t > z$, since $K$ is monotone, we have 
\[
K(t-y_j) - K(z-y_j) \le 0 \le K(t-z+1) - K(1).
\]
If $t > y_j > z,$ also using monotonicity, we get
\[
K(-1) - K(z-y_j) \le 0 \le K(t-z+1) - K(t-y_j).
\]
If $y_j < z$, then, by the choice of $z$, we have $y_j < z-1$. Applying Lemma \ref{lemma:concaveShift} with $x = 1, \ k = z-y_j-1, \ h = t-z$, we obtain
\[
K(t-y_j) - K(z-y_j) \le K(t-z+1) - K(1).
\]
In all three cases we get
\[
K(t-y_j) - K(z-y_j) \le 
K(t-z+1) - \min\{K(-1),K(1)\}.
\]
Using this and monotonicity of $K$, we obtain
\begin{align*}
F(\yy,t) &\le F(\yy, z) + J(t) + \sum \limits_{j=1}^n r_j (K(t-z+1) - \min\{K(-1),K(1)\}) - J(z) \\
& \le F(\yy, z) + J(t) + R(K(t-z+1) - \min\{K(-1),K(1)\}) - J(z).
\end{align*}
Since $J$ is $R$-admissible, for any $x \in \RR$ there exists $\widetilde{q}(x) = \widetilde{q}(x, K, J, R) \ge |x|$ such that
\[
J(t) + R (K(t-x+1) - \min\{K(-1),K(1)\}) - J(x) \le 0, \quad t > \widetilde{q}(x).
\]
Note that $z \in \{z_0,\ldots,z_n\}$ for any $\yy \in \Sinf$. Set $q := \max \limits_{j=0,\ldots,n} \widetilde{q}(z_j).$ Therefore, for any $\yy \in \Sinf$ we obtain
\[
F(\yy,t) \le F(\yy, z), \quad t > q.
\]

The proof for negative $t$ is carried out in a similar way.
\end{proof}

\begin{Def}
Let $K$ be a monotone kernel, $R>0, \ n \in \NN$ and $J$ be an $R$-admissible $n$-field for $K.$ Consider a number $q$ such that $\mol(\yy)=\sup \limits_{t \in [-q,q]} F(\yy, t)$ for all $r_1, \ldots, r_n > 0$ with $\ds \sum \limits_{j=1}^n r_j \le R$ and for all $\yy \in \Sinf$. Then we say that $q$ has the Mhaskar-Rakhmanov-Saff property for the system $K, J$~and~$R.$
\end{Def}

In what follows, we consider $r_1, \ldots, r_n > 0$ and $\ds \sum \limits_{j=1}^n r_j = R$.

The following lemmas on numbers with Mhaskar-Rakhmanov-Saff property are necessary to prove the main result.

\begin{Lem} \label{lemma:equioscillation}
Let $K$ be a monotone kernel, $J$ be an admissible field and let $q$ have the Mhaskar-Rakhmanov-Saff property. Assume that $\ww \in \Simplex{\ell}$ is an equioscillation point on $[-\ell,\ell],$ where $\ell \ge q.$ Then $\ww$ is an equioscillation point on $\RR.$ 
\end{Lem}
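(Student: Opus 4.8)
The plan is to show that all the local maxima $m_j(\ww)$ for $F$ on $\RR$ coincide, knowing that the corresponding quantities $m^{\ell}_j(\ww)$ on the segment $[-\ell,\ell]$ all coincide. The crux is that passing from the interval $[-\ell,\ell]$ to $\RR$ can only affect the two end intervals, $(-\infty, w_1]$ and $[w_n, +\infty)$: the interior sup-intervals $[w_j, w_{j+1}]$, $j = 1,\ldots,n-1$, are literally the same whether we work on $[-\ell,\ell]$ or on $\RR$, so $m_j(\ww) = m^{\ell}_j(\ww)$ for $j = 1,\ldots,n-1$ with no argument needed. Likewise $m_0(\ww) = \sup_{t \in (-\infty, w_1]} F(\ww, t) \ge \sup_{t \in [-\ell, w_1]} F(\ww, t) = m^{\ell}_0(\ww)$ and symmetrically $m_n(\ww) \ge m^{\ell}_n(\ww)$, while from the equioscillation hypothesis on $[-\ell,\ell]$ all the $m^{\ell}_j(\ww)$ equal a common value, call it $c$; and in particular $c = m^{\ell}_j(\ww) = m_j(\ww)$ for the interior indices.

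Next I would invoke the Mhaskar-Rakhmanov-Saff property of $q$ together with $\ell \ge q$. By Theorem 3.1 (or directly by the definition of the MRS property), $\mol(\ww) = \sup_{t \in [-q, q]} F(\ww, t)$. Since $q \le \ell$, the segment $[-q,q]$ is contained in $[-\ell, \ell]$, hence $\sup_{t \in [-q,q]} F(\ww, t) \le \sup_{t \in [-\ell,\ell]} F(\ww, t) = \molsegm{\ell}(\ww) = \max_{j} m^{\ell}_j(\ww) = c$. Therefore $\mol(\ww) \le c$. On the other hand, $\mol(\ww) = \max_{j=0,\ldots,n} m_j(\ww) \ge m_1(\ww) = c$ (using $n \ge 1$; if one wishes to avoid relying on the existence of an interior index, the same bound follows from $\mol(\ww) \ge m_0(\ww) \ge m^{\ell}_0(\ww) = c$). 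Combining, $\mol(\ww) = c$.

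Finally, I would close the loop: for every $j$, $c = m^{\ell}_j(\ww) \le m_j(\ww) \le \mol(\ww) = c$, forcing $m_j(\ww) = c$ for all $j = 0,\ldots,n$. That is precisely the statement that $\ww$ is an equioscillation point on $\RR$. I do not expect a genuine obstacle here; the only point requiring a little care is the observation that the extra tails $[-\ell, w_1]$ versus $(-\infty, w_1]$ (and symmetrically at $w_n$) cannot raise the overall supremum above $c$, which is exactly what the MRS property of $q$ buys us — without it, the tails of $F(\ww,\cdot)$ could in principle contribute a larger value and destroy the equality of the local maxima.
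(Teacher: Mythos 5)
Your proposal is correct and follows essentially the same route as the paper: identify the interior local maxima on $\RR$ with those on $[-\ell,\ell]$, note $m_0(\ww)\ge m^{\ell}_0(\ww)$ and $m_n(\ww)\ge m^{\ell}_n(\ww)$, and use the Mhaskar--Rakhmanov--Saff property of $q$ with $[-q,q]\subseteq[-\ell,\ell]$ to get $\mol(\ww)=\molsegm{\ell}(\ww)$, which equals the common equioscillation value and sandwiches the two end maxima. The only difference is cosmetic bookkeeping (you first establish $\mol(\ww)=c$ and then squeeze each $m_j(\ww)$), so nothing further is needed.
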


\begin{proof}
We have 
\begin{align}
m_0(\ww) & \ge m^{\ell}_0(\ww), 
\quad m_n(\ww) \ge m^{\ell}_n(\ww), \label{lemma:equioscillation:eq1} \\
m_j(\ww) & = m^{\ell}_j(\ww), \ j = 1,\ldots,n-1. \label{lemma:equioscillation:eq2}
\end{align}
On the other hand, by definition of $q$ and the equioscillation property of $\ww$ on $[-\ell,\ell] \supseteq [-q,q],$
\[
\mol(\ww) = \molsegm{\ell}(\ww) = \max \limits_{k=0,\ldots,n} m^{\ell}_k(\ww) = m^{\ell}_j(\ww), \quad j=0,\ldots,n.
\]
Thus we have
\begin{gather} \label{lemma:equioscillation:eq3}
m_0(\ww) \le \mol(\ww) = m^{\ell}_0(\ww), 
\quad m_n(\ww) \le \mol(\ww) = m^{\ell}_n(\ww).
\end{gather}
By \eqref{lemma:equioscillation:eq1}, \eqref{lemma:equioscillation:eq2} and \eqref{lemma:equioscillation:eq3}, we have for all $j \in \{0,\ldots,n\}$ 
\[
m_j(\ww) = m^{\ell}_j(\ww).
\]
The statement is proved.
\end{proof}

\begin{Lem} \label{lemma:minimax:compactRoot}
Let $K$ be a monotone kernel, $J$ be an admissible field function and $q$ have the Mhaskar-Rakhmanov-Saff property. If $\ell \ge q$, then for any $\yy \in \Sinf \setminus \Simplex{\ell}$ we have
\[
\mol(\yy') \le \mol(\yy),
\]
where
\[\yy'=(y'_1,\ldots,y'_n), \quad
y'_j = 
\begin{cases} -\ell, & y_j < -\ell,\\
y_j, & y_j \in [-\ell,\ell],\\
\ell, & y_j > \ell.
\end{cases}
\]
\end{Lem}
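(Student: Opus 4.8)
The statement says that projecting an outlying node vector $\yy$ onto the simplex $\Simplex{\ell}$ (clamping each coordinate to $[-\ell,\ell]$) does not increase $\mol$. Since $\ell \ge q$ has the Mhaskar-Rakhmanov-Saff property, $\mol(\yy) = \sup_{t\in[-q,q]} F(\yy,t)$ and similarly $\mol(\yy') = \sup_{t\in[-q,q]} F(\yy',t)$; hence it suffices to show $F(\yy',t) \le \mol(\yy)$ for every $t \in [-q,q]$. The key point is that on the window $[-q,q] \subseteq [-\ell,\ell]$, moving a node from outside $[-\ell,\ell]$ inward to the boundary point $\pm\ell$ increases the distance (in the relevant direction) from $t$ to that node by \emph{nothing favorable}; rather, it can only raise the kernel value. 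So I would compare $F(\yy',t)$ not with $F(\yy,t)$ but with $F(\yy,t')$ for a suitably chosen $t'$ that lies in the range where $F(\yy,\cdot)$ is being supremized.

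\textbf{Main steps.} First, fix $t \in [-q,q]$. For each $j$ with $y_j \in [-\ell,\ell]$ the term $r_j K(t - y_j)$ is unchanged. For $j$ with $y_j > \ell$ we have $y'_j = \ell$, and since $t \le q \le \ell < y_j$, both $t - \ell$ and $t - y_j$ are negative with $t-y_j < t - \ell < 0$; by monotonicity of $K$ (decreasing on the negative side), $K(t-\ell) \le K(t-y_j)$... wait — that is the wrong direction. Here is the actual mechanism: I should not bound $F(\yy',t)$ by $F(\yy,t)$ termwise, because pulling a node toward $t$ raises $K$. Instead I would argue as in Lemma~\ref{lemma:equioscillation} and the proof of Theorem~\ref{theorem:MRS}: the clamped configuration $\yy'$ still satisfies $\mol(\yy') = \sup_{t\in[-q,q]}F(\yy',t)$, and for $t$ in this window one compares with the value of $F(\yy,\cdot)$ at the corresponding interval endpoint. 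Concretely, for $t \in [-q,q]$, if all outlying $y_j$ that were clamped to $\ell$ are to the right, then the value $F(\yy', t)$ involves $K(t-\ell)$ in place of $K(t-y_j)$; one uses Lemma~\ref{lemma:concaveShift} to write $K(t-\ell) \le K(t - y_j) + \big(K(\ell' ) - K(\ell)\big)$-type telescoping, and then absorbs the defect using that $\yy$ has some node configuration whose $\mol$ already dominates — ultimately reducing to the estimates \eqref{theorem:MRS:eq1}--\eqref{theorem:MRS:eq4} with $b$ replaced by $\ell$. The cleanest route: show directly that $F(\yy',t) \le \mol(\yy)$ by repeating verbatim the argument of Theorem~\ref{theorem:MRS} but now with $t \in [-q,q]$ fixed and $\ell$ playing the role of $b$, since the chain of inequalities there bounds $F$ at an outside point by $F$ at one of the reference points $z_i$, all of which lie in $[-\ell,\ell]$.

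\textbf{The honest shortcut.} Actually the simplest correct argument: partition $\{1,\dots,n\}$ by whether $y_j < -\ell$, $|y_j|\le\ell$, or $y_j>\ell$. Take any $t \in [-q,q] \subseteq (-\ell,\ell)$. For indices with $y_j > \ell$: since $t < \ell < y_j$ and $K$ is increasing on $(0,p)$... no, $t - y_j < 0$. We have $t - y'_j = t - \ell$ and $t - y_j$, both negative, $t - y_j < t-\ell$, so by monotonicity (decreasing on negatives, i.e. $s \mapsto K(s)$ increasing as $s \uparrow 0$) $K(t-\ell) \ge K(t-y_j)$; similarly for $y_j < -\ell$, $K(t+\ell) \ge K(t-y_j)$. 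So $F(\yy',t) \ge F(\yy,t)$ — the wrong direction again, confirming that a termwise comparison at the same $t$ cannot work. Therefore the real proof must track the \emph{location of the supremum}: I would let $t_0 \in [-q,q]$ be a point where $\sup_{t\in[-q,q]}F(\yy',t)$ is attained (or a near-maximizer), and then produce a point $s_0$, to be chosen among the $z_i$ or on an interval of $\yy$, such that $F(\yy', t_0) \le F(\yy, s_0) \le \mol(\yy)$, exactly by rerunning the estimate of Theorem~\ref{theorem:MRS} with the bound-parameter $b$ taken to be $\ell$ (legitimate since $|z_i| < b - 1 \le \ell - 1$ can be arranged, or one re-selects $q$; the constants only depend on $K,J,R$).

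\textbf{Expected main obstacle.} The subtlety is precisely that one cannot compare $F(\yy',\cdot)$ and $F(\yy,\cdot)$ pointwise at a fixed argument — clamping nodes inward typically \emph{raises} $F$ near the origin. The whole content is that it does not raise $F$ \emph{above} $\mol(\yy)$, and extracting this requires reusing the Mhaskar-Rakhmanov-Saff machinery (Lemma~\ref{lemma:concaveShift} plus the three-region split) with $\ell$ in the role of the cutoff $b$, noting carefully that all constants in Theorem~\ref{theorem:MRS} depend only on $K$, $J$, $R$ and not on the particular $r_j$ or $\yy$, so the same $q$ works for $\yy'$. I expect the bookkeeping of which reference point $z_i$ to use — and checking it still lies in $[-\ell,\ell]$ — to be the only genuinely delicate point; everything else is a transcription of the earlier proof.
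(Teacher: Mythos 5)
Your proposal goes astray because of a single misreading of what ``monotone'' means for a kernel here, and that sign error drives you away from the correct (and very short) argument. In this paper a monotone kernel is \emph{decreasing} on $(-\infty,0)$ and \emph{increasing} on $(0,\infty)$, i.e.\ $K$ grows with $|s|$; the model is $K=\log|\cdot|$. Hence for $t\in[-\ell,\ell]$ and $y_j>\ell$ we have $t-y_j<t-\ell\le 0$, and ``decreasing on the negative half-line'' (decreasing as $s$ increases toward $0$, not increasing as you parenthetically asserted) gives $K(t-y_j)\ge K(t-\ell)$; similarly $K(t-y_j)\ge K(t+\ell)$ when $y_j<-\ell$. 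So clamping the outlying nodes to $\pm\ell$ \emph{lowers} every affected term, i.e.\ $F(\yy',t)\le F(\yy,t)$ for all $t\in[-\ell,\ell]$ --- exactly the pointwise comparison you declared impossible, and it is the entire content of the paper's proof (amusingly, the first inequality you wrote down and then discarded as ``the wrong direction'' was the right one). Taking suprema over $[-\ell,\ell]$ and using the Mhaskar--Rakhmanov--Saff property of $q\le\ell$, which gives $\sup_{t\in[-\ell,\ell]}F(\yy,t)=\mol(\yy)$ and $\sup_{t\in[-\ell,\ell]}F(\yy',t)=\mol(\yy')$ because $[-q,q]\subseteq[-\ell,\ell]$, yields $\mol(\yy')\le\mol(\yy)$.

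As a consequence, your ``honest shortcut'' ends at a false conclusion ($F(\yy',t)\ge F(\yy,t)$ on the window), and the fallback you sketch --- re-running the proof of Theorem~\ref{theorem:MRS} with $\ell$ in the role of $b$ while tracking where the supremum of $F(\yy',\cdot)$ sits --- is both unnecessary and not actually carried out: the estimates of that theorem are set up for $t$ \emph{outside} $[-q,q]$, so they do not transcribe to $t\in[-q,q]$ without genuinely new work, and you leave the choice of reference point $z_i$ and the bookkeeping unresolved. The underlying intuition ``clamping nodes inward raises $F$ near the origin'' is backwards for kernels of this type: moving a root closer to the evaluation point decreases $\log|t-y|$, and the same monotone behaviour is precisely what the lemma exploits.
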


\begin{proof} By monotonicity of $K,$ for $t \in [-\ell,\ell]$
\begin{align*}
F(\yy,t) & = J(t) + \sum \limits_{y_j < -\ell} r_j K(t-y_j) + \sum \limits_{y_j \in [-\ell,\ell]} r_j K(t-y_j) + \sum \limits_{y_j > \ell} r_j K(t-y_j) \\
& \ge J(t) + \sum \limits_{y_j < -\ell} r_j K(t+\ell) + \sum \limits_{y_j \in [-\ell,\ell]} r_j K(t-y_j) + \sum \limits_{y_j > \ell} r_j K(t-\ell) = F(\yy',t).
\end{align*}
Hence 
\[
\sup \limits_{t \in [-\ell,\ell]} F(\yy, t) \ge \sup \limits_{t \in [-\ell,\ell]} F(\yy', t).
\]
Finally, note that by definition of $q$ and because of $q \le \ell$, we have $\sup \limits_{t \in [-\ell,\ell]} F(\yy, t) = \mol(\yy)$ and $\sup \limits_{t \in [-\ell,\ell]} F(\yy', t) = \mol(\yy').$ The lemma is proved.
\end{proof}

\section{Other lemmas}
The following statement is known, but we present its proof.
\begin{Lem} \label{lemma:uniformContin}
Suppose that a function $g$ is concave on the semiaxis $[M,\infty)$ (on $(-\infty,-M]$), is nondecreasing on $[M,\infty)$ (nonincreasing on $(-\infty,-M]$) and is continuous at $M$ (at $-M$). Then $g$ is uniformly continuous on $[M,\infty)$ (on $(-\infty,-M]$).
\end{Lem}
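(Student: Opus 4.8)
The plan is to prove uniform continuity of a concave, nondecreasing function $g$ on $[M,\infty)$ by exploiting the fact that concavity forces the one-sided difference quotients to be \emph{monotone}, hence the function becomes ``flatter and flatter'' as we move to the right. I would first reduce to the case $[M,\infty)$, the case $(-\infty,-M]$ following by the substitution $t \mapsto -t$ exactly as in Lemma~\ref{lemma:concaveShift}'s proof. Fix $\varepsilon > 0$; the goal is to produce a single $\delta > 0$ that works for all pairs of points at distance at most $\delta$.

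The key step uses Lemma~\ref{lemma:concaveShift}: for a concave $g$, the increment $g(x+\delta) - g(x)$ is nonincreasing in $x$ (take $k = \delta$ and let $h$ increase in the lemma). Since $g$ is also nondecreasing, $g(x+\delta) - g(x) \ge 0$ for all $x \ge M$. Therefore the quantity $\omega(\delta) := g(M+\delta) - g(M)$ is an upper bound: for every $x \ge M$ and every $0 \le s \le \delta$ we have $0 \le g(x+s) - g(x) \le g(x+\delta) - g(x) \le g(M+\delta) - g(M) = \omega(\delta)$, where the middle inequality is monotonicity of $g$ and the last is the shift inequality from Lemma~\ref{lemma:concaveShift}. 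So the modulus of continuity of $g$ on $[M,\infty)$ is controlled by $\omega(\delta)$, which depends only on $\delta$ and not on the location $x$.

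It remains to check $\omega(\delta) \to 0$ as $\delta \downarrow 0$, i.e. that $g$ is right-continuous at the single point $M$. But a concave function on an interval is continuous on the interior of that interval, and here $M$ is a left endpoint; continuity from the right at $M$ is not automatic from concavity alone. However, $g$ is also nondecreasing, so $\lim_{\delta \downarrow 0} g(M+\delta)$ exists and equals $\inf_{\delta > 0} g(M+\delta) =: L \ge g(M)$. If $L > g(M)$, then $g$ would have a jump upward at $M$, contradicting concavity: for small $\delta$ the chord from $(M, g(M))$ to $(M+2\delta, g(M+2\delta))$ would lie strictly below the graph value $g(M+\delta) \ge L$, yet its midpoint value is $\tfrac12(g(M) + g(M+2\delta)) \le \tfrac12(g(M) + \sup)$, and pushing $\delta \to 0$ forces $g(M+\delta) \le \tfrac12(g(M) + L) < L$, a contradiction. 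Hence $L = g(M)$ and $\omega(\delta) \to 0$. Given $\varepsilon > 0$, choose $\delta$ with $\omega(\delta) < \varepsilon$; then any $x, y \in [M,\infty)$ with $|x - y| \le \delta$ satisfy $|g(x) - g(y)| \le \omega(\delta) < \varepsilon$, which is uniform continuity.

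The only genuinely delicate point is the right-continuity at the endpoint $M$; everything else is a direct packaging of Lemma~\ref{lemma:concaveShift} together with monotonicity. One could sidestep even this subtlety by noting that if $g$ fails to be right-continuous at $M$ one may simply shrink the domain to $[M', \infty)$ for some $M' > M$ (on which $M'$ is an interior point of any interval of concavity, so $g$ is continuous there) and observe the uniform-continuity argument above is insensitive to replacing $M$ by $M'$; but in the applications of this lemma in the paper the functions involved (being $K$ restricted to a half-line, suitably normalized) are already continuous up to and including the relevant endpoint, so the clean statement suffices.
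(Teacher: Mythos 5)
Your main argument is exactly the paper's: Lemma \ref{lemma:concaveShift} together with monotonicity gives $0 \le g(t_1+h)-g(t_1) \le g(M+h)-g(M)$ for all $t_1\ge M$ and $0\le h<\delta$, so the modulus of continuity on all of $[M,\infty)$ is controlled by the behaviour of $g$ at the single point $M$; the paper then simply invokes continuity of the concave function $g$ at $M$ and concludes.

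The extra piece you add --- a proof that $g$ is right-continuous at $M$ --- is incorrect. Concavity gives $g(M+\delta)\ge\tfrac12\bigl(g(M)+g(M+2\delta)\bigr)$, i.e.\ the graph lies \emph{above} the chord; it yields no upper bound on $g(M+\delta)$, so your inequality $g(M+\delta)\le\tfrac12\bigl(g(M)+L\bigr)$ does not follow and no contradiction arises. Indeed the claim itself is false: the function with $g(M)=0$ and $g(t)=1$ for $t>M$ is concave and nondecreasing on $[M,\infty)$ yet not right-continuous at $M$ (concavity at an included left endpoint only forbids $g(M)$ from \emph{exceeding} the interior limit, not from lying below it). So with the bare chord definition of concavity, continuity at the closed endpoint is really an implicit extra hypothesis rather than a consequence; the paper glosses over this by asserting that $g$ is continuous ``in particular, at point $M$'', which is harmless because in its only application ($K$ on $[1,\infty)$) the endpoint $1$ is interior to the concavity interval $(0,\infty)$, where continuity is automatic. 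Your own fallback --- shrink to $[M',\infty)$ with $M'>M$, or note that in the intended applications $g$ is continuous at the endpoint --- is the correct repair; had you led with that instead of the faulty jump argument, your proof would be complete and essentially identical to the paper's.
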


\begin{proof}
Now let us prove the statement for $[M, \infty)$, i.e., that
\[
\forall \ \varepsilon > 0 \ \exists \ \delta(\varepsilon) > 0 \ \forall \ t_1, t_2 \in [M,\infty) \quad |t_1-t_2| < \delta(\varepsilon) \implies |g(t_1)-g(t_2)| < \varepsilon.
\]
Since $g$ is continuous at $M,$
\begin{gather} \label{lemma:uniformContin:eq1}
\forall \ \varepsilon > 0 \ \exists \ \delta(\varepsilon) > 0 \quad 0 < t - M < \delta(\varepsilon) \implies |g(t)-g(M)| < \varepsilon.
\end{gather}
Fix an arbitrary $\varepsilon > 0$ and $\delta(\varepsilon)$ from \eqref{lemma:uniformContin:eq1}. Take $t_1, t_2 \in [M, \infty)$ and assume that $t_2 = t_1 + h, \ 0 \le h < \delta(\varepsilon).$ By Lemma \ref{lemma:concaveShift} and monotonicity of $g$, we get
\[
0 \le g(t_2)-g(t_1) \le g(M+h)-g(M).
\]
Taking into account \eqref{lemma:uniformContin:eq1}, we have obtained the uniform continuity of $g$ on $[M, \infty).$

The proof for $(-\infty, -M]$ is similar.
\end{proof}

\begin{Lem} \label{lemma:-infty}
If $K \colon (-\infty, 0) \cup (0, \infty) \to \RR$ is a kernel function and $J \colon \RR \to \uRR$ is an admissible field function, then for any $\yy \in \Sinf$
we have
\[
\lim \limits_{\xx \to \yy, \ |t| \to \infty}
F(\xx,t) = -\infty.
\]
\end{Lem}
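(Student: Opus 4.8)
The plan is to reduce everything to the limit $t \to +\infty$ and then make that limit uniform in $\xx$ near $\yy$. The case $t \to -\infty$ will follow by the substitution $\widetilde{J}(s) := J(-s)$, $\widetilde{K}(s) := K(-s)$ together with a reordering of the nodes, exactly as in the proof of the second lemma of Section~2. So, fixing $\yy = (y_1, \dots, y_n) \in \Sinf$, it suffices to show that for every $C > 0$ there are $\delta > 0$ and $T$ with $F(\xx, t) < -C$ whenever $\xx \in \Sinf$, $|x_j - y_j| < \delta$ for all $j$, and $t > T$. From the identity
\[
F(\xx, t) = \sum_{j=1}^{n} \frac{r_j}{R}\bigl(J(t) + R\, K(t - x_j)\bigr)
\]
it is enough to prove, for each fixed $y \in \RR$, that $J(t) + R\,K(t - x) \to -\infty$ as $t \to +\infty$ uniformly for $x$ in some neighbourhood of $y$; one then intersects these neighbourhoods and takes the largest threshold over $j = 1, \dots, n$.

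Fix $y \in \RR$ and restrict to $|x - y| \le 1$ and $t$ large enough that $t - x > 1$. I would then split according to the behaviour of the concave function $K$ on $(0, +\infty)$. If $K$ is nondecreasing on $(0, +\infty)$, then it is concave and nondecreasing on $[1, +\infty)$, hence uniformly continuous there by Lemma~\ref{lemma:uniformContin}. Given $\varepsilon > 0$, one picks $\delta \in (0, 1)$ from uniform continuity; for $|x - y| < \delta$ and $t$ large both $t - x$ and $t - y$ lie in $[1, +\infty)$ and differ by less than $\delta$, so $K(t - x) \le K(t - y) + \varepsilon$ and
\[
J(t) + R\,K(t - x) \le \bigl(J(t) + R\,K(t - y)\bigr) + R\varepsilon .
\]
Applying \eqref{def:admissible} to the constant vector $(y, \dots, y) \in \Sinf$ gives $J(t) + R\,K(t - y) \to -\infty$ as $t \to +\infty$, so the right-hand side tends to $-\infty$ uniformly for $|x - y| < \delta$.

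If instead $K$ is not nondecreasing on $(0, +\infty)$, one picks $0 < a < b$ with $K(a) > K(b)$; concavity then forces $K(s) \le K(b) + \frac{K(b) - K(a)}{b - a}(s - b) \to -\infty$ as $s \to +\infty$. Hence $K(t - x) \to -\infty$ uniformly for $|x - y| \le 1$, since $t - x \ge t - y - 1 \to +\infty$ uniformly. Because $J$ is a field, it is bounded above, say by $\Lambda$, and therefore $J(t) + R\,K(t - x) \le \Lambda + R\,K(t - x) \to -\infty$ uniformly for $|x - y| \le 1$. In both cases the required uniform estimate holds, and combining over $j$ finishes the argument.

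The point to watch is exactly this split into cases. The tempting decomposition $J(t) + R\,K(t - x) = \bigl(J(t) + R\,K(t - y)\bigr) + R\,\bigl(K(t - x) - K(t - y)\bigr)$ is useful only when the increment $K(t - x) - K(t - y)$ stays bounded as $t \to +\infty$, which can fail when $K(s) \to -\infty$ (e.g.\ $K(s) = -s^2$, where the increment grows linearly in $t$); that regime has to be handled by the direct bound using boundedness of $J$ rather than by perturbing off $y$. Everything else is just a uniform-in-$x$ rereading of the argument already used for $\lim_{|t| \to \infty} F(\yy, t) = -\infty$, so I expect no further obstacle.
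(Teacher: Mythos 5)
Your proposal is correct and follows essentially the same route as the paper: the same case split according to whether $K$ is nondecreasing on $(0,+\infty)$, with Lemma~\ref{lemma:uniformContin} providing uniform continuity on $[1,+\infty)$ to control the increments $K(t-x_j)-K(t-y_j)$ in the monotone case, and boundedness of $J$ together with $K(s)\to-\infty$ (forced by concavity) in the non-monotone case. Your explicit $\varepsilon$--$\delta$ uniformity and the $r_j/R$ averaging are only presentational refinements of the paper's argument.
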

\begin{proof}
For brevity, let us prove the lemma for $t \to \infty.$ For $t \to -\infty$ the proof is carried out analogously.

1. Suppose that $K$ is nondecreasing on $(0, \infty)$. By Lemma \ref{lemma:uniformContin}, the function $K$ is uniformly continuous on $[1,\infty)$. Hence if $\|\xx - \yy\|$ is sufficiently small, then for large $t$ the differences $K(t-x_j) - K(t-y_j)$ are bounded above. 

Moreover, since $J$ is admissible, 
\[
\lim \limits_{|t| \to \infty}
F(\yy,t) = -\infty.
\]
Thus we have
\[
F(\xx,t) = F(\yy,t) + \sum \limits_{j=1}^n r_j (K(t-x_j) - K(t-y_j)) \to -\infty, 
\quad \xx \to \yy, \ t \to \infty.
\]

2. Assume that $K$ is not nondecreasing on $(0, \infty)$. Hence, by concavity, $K$ decreases for large $t$. As $\xx \to \yy,$ one can assume that $x_j \le y_j + 1$ for all $j.$ So, for large $t$ we have that $K(t-x_j) \le K(t-y_j-1)$ for all $j$. By the admissibility of $J$, we have
\[
J(t) + \sum \limits_{j=1}^n r_j K(t-x_j) \le J(t) + \sum \limits_{j=1}^n r_j K(t-y_j-1) \to -\infty.
\]
Hence $\lim \limits_{\xx \to \yy, \ t \to \infty} F(\xx,t) = -\infty.$ 
\end{proof}

\begin{Lem} \label{lemma:M_L} Let $K \colon (-\infty, 0) \cup (0, \infty) \to \RR$ be a kernel and $J: \RR \to \uRR$ be an admissible field. 
Then for any $L > 0$ there is $M_L \ge L$ such that for each $ \yy \in \Simplex{L}$
\begin{gather*}
m^{M_L}_j(\yy) = m_j(\yy), \quad j = 0, \ldots, n, \\
\mol(\yy) = \molsegm{M_L}(\yy), \quad \mul(\yy)=\mulsegm{M_L}(\yy).
\end{gather*}
\end{Lem}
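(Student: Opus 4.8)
The plan is to choose $M_L$ so large that the two outer local maxima $m_0(\yy)$ and $m_n(\yy)$ are already attained inside $[-M_L,M_L]$, uniformly over $\yy\in\Simplex{L}$. For the inner indices $j=1,\dots,n-1$ one has $m_j(\yy)=m^{M_L}_j(\yy)$ for free, since both are the supremum of $F(\yy,\cdot)$ over the same interval $[y_j,y_{j+1}]\subseteq[-L,L]\subseteq[-M_L,M_L]$; and once $m_j(\yy)=m^{M_L}_j(\yy)$ for all $j$, the equalities $\mol(\yy)=\molsegm{M_L}(\yy)$ and $\mul(\yy)=\mulsegm{M_L}(\yy)$ follow by taking $\max_j$ and $\min_j$. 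So the whole content lies in controlling $m_0$ and $m_n$.

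\textbf{Step 1: a uniform decay estimate.} I claim that
\[
\beta(s):=\sup_{\yy\in\Simplex{L}}\ \sup_{|t|\ge s}F(\yy,t)\ \longrightarrow\ -\infty\qquad(s\to\infty).
\]
This follows from Lemma~\ref{lemma:-infty} together with the compactness of $\Simplex{L}$: if it failed, there would be $N\in\RR$, a sequence $s_k\to\infty$, points $\yy_k\in\Simplex{L}$ and reals $|t_k|\ge s_k$ with $F(\yy_k,t_k)\ge -N$; passing to a subsequence along which $\yy_k\to\yy^{*}\in\Simplex{L}$ and using $|t_k|\to\infty$, this contradicts $\lim_{\xx\to\yy^{*},\,|t|\to\infty}F(\xx,t)=-\infty$. (Alternatively one may argue directly, as in the proofs of Theorem~\ref{theorem:MRS} and Lemma~\ref{lemma:-infty}: since $y_j\in[-L,L]$ one has $t-y_j\in[t-L,t+L]$, so the eventual monotonicity of the concave $K$ on each of $(0,\infty)$ and $(-\infty,0)$ lets one bound $K(t-y_j)$ above by $K(t+L)$ or by $K(t-L)$; combining with Lemma~\ref{lemma:concaveShift} or Lemma~\ref{lemma:uniformContin}, respectively with the finiteness of $\lim_{t\to\pm\infty}K(t)$ in the cases where $K$ eventually decreases at $+\infty$ or increases at $-\infty$, and with the admissibility of $J$, yields the same $\yy$-uniform bound.)

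\textbf{Step 2: controlling $m_n$ (and, symmetrically, $m_0$).} If $J\equiv-\infty$ on $(L,\infty)$, then $F(\yy,t)=-\infty$ for all $t>L$ and all $\yy\in\Simplex{L}$, so $m_n(\yy)=\sup_{t\in[y_n,L]}F(\yy,t)=m^{M_L}_n(\yy)$ for every $M_L\ge L$. Otherwise fix once and for all a point $z_{+}>L$ with $J(z_{+})>-\infty$. For $\yy\in\Simplex{L}$ each $z_{+}-y_j$ lies in the fixed compact interval $[z_{+}-L,\,z_{+}+L]\subset(0,\infty)$, on which the concave (hence continuous) kernel $K$ is bounded below; therefore $F(\yy,z_{+})=J(z_{+})+\sum_{j=1}^{n}r_jK(z_{+}-y_j)\ge -C_{+}$ with $C_{+}$ depending only on $K,J,z_{+},L,R$, not on $\yy$. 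Choosing similarly $z_{-}<-L$ with $J(z_{-})>-\infty$ on the left (or invoking the degenerate case there), I take $M_L$ larger than $L$, $z_{+}$ and $|z_{-}|$, and large enough that $\beta(M_L)<-C_{+}$ and $\beta(M_L)<-C_{-}$, which is possible by Step~1. For any $\yy\in\Simplex{L}$ we then have $z_{+}\in[y_n,M_L]$, hence
\[
m^{M_L}_n(\yy)\ \ge\ F(\yy,z_{+})\ \ge\ -C_{+}\ >\ \beta(M_L)\ \ge\ \sup_{t>M_L}F(\yy,t),
\]
and therefore $m_n(\yy)=\max\bigl\{m^{M_L}_n(\yy),\ \sup_{t>M_L}F(\yy,t)\bigr\}=m^{M_L}_n(\yy)$; symmetrically $m_0(\yy)=m^{M_L}_0(\yy)$. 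Together with the inner indices this proves all the asserted equalities.

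\textbf{Expected main obstacle.} I expect Step~2 to be the only delicate point, precisely because an admissible field $J$ may be $\equiv-\infty$ on an entire half-line (or off $[-L,L]$). One must therefore separate that degenerate situation from the one in which a comparison point $z_{\pm}$ with $J(z_{\pm})>-\infty$ is available beyond $[-L,L]$, and in the latter case exploit that the translates $z_{\pm}-y_j$ remain in a compact subinterval of $(0,\infty)$, respectively $(-\infty,0)$ — this is what makes $F(\yy,z_{\pm})$ bounded below uniformly in $\yy\in\Simplex{L}$, so that it outweighs the uniform upper bound $\beta(M_L)$ from Step~1. Step~1 itself, and the passage from the equalities $m_j(\yy)=m^{M_L}_j(\yy)$ to the statements on $\mol$ and $\mul$, are routine.
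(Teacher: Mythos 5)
Your proof is correct, and it reaches the conclusion by a somewhat different organization than the paper. The paper argues by contradiction: assuming no admissible $M_L$ exists, it extracts (by Bolzano--Weierstrass on the compact simplex $\Simplex{L}$) a convergent subsequence $\yy_{N_k}\to\yy^*$ with exceptional points $t_{N_k}\to-\infty$, and then, for an arbitrary $u<-L$, combines Lemma~\ref{lemma:-infty} with the continuity of $K$ at $u-y^*_j<0$ to force $J(u)=-\infty$; thus $J\equiv-\infty$ on the half-line, which kills the assumed strict inequality since then $F(\yy_N,t_N)=-\infty$. You instead proceed directly: you first uniformize Lemma~\ref{lemma:-infty} over the compact simplex into the decay bound $\beta(s)\to-\infty$ (the same compactness-plus-subsequence mechanism, used once and packaged as a quantitative statement), and then you anchor the comparison at fixed points $z_\pm$ beyond $\pm L$ where $J$ is finite, using boundedness of the concave $K$ on the compact intervals $[z_\pm\mp L, z_\pm\pm L]$ to get a $\yy$-uniform lower bound on $F(\yy,z_\pm)$ that beats $\beta(M_L)$; the situation where $J\equiv-\infty$ on a whole half-line beyond $\pm L$ you treat separately and trivially. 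In effect the two arguments are mirror images: the paper's contradiction shows that a failure would force $J$ to have no finiteness point outside $[-L,L]$ at all (and even then the strict inequality fails), whereas you exploit such finiteness points positively when they exist and dispose of the degenerate case by hand. What your route buys is an explicit, reusable uniform bound $\beta$ and a transparent treatment of the degenerate case; what the paper's buys is brevity, since one contradiction argument covers both situations without locating any particular finiteness point of $J$. All the auxiliary steps you call routine (the inner indices $j=1,\dots,n-1$, and passing from $m_j=m^{M_L}_j$ to the statements for $\mol$ and $\mul$) are indeed exactly as in the paper's reduction.
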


\begin{proof}
It is enough to prove that for any $L > 0$ there is $M_L \ge L$ such that for any $\yy \in \Simplex{L}$ and $t \in \RR$
\[
\left( t < -M_L \implies F(\yy,t) \le \sup \limits_{-M_L \le t \le -L} F(\yy,t) \right)
\]
and 
\[
\left( t > M_L \implies F(\yy,t) \le \sup \limits_{L \le t \le M_L} F(\yy,t) \right).
\]
Let us prove the statement for $t < -M_L$. The proof of the other one is similar.

Assume for a contradiction that for some $L$
\begin{equation}
\label{lemma:M_L:assumption}
\begin{gathered}
\forall N \in \NN, \ N \ge L \quad \exists \,\yy_N \in \overline{S^L} \quad \exists\, t_N \in \RR \\ \left(t_N < -N \quad \text{and} \quad F(\yy_N,t_N) > \sup \limits_{-N \le t \le -L} F(\yy_N,t) \right).
\end{gathered}
\end{equation}
We have the bounded sequence $\{\yy_N\} \subset \Simplex{L}.$ By the Bolzano-Weierstrass theorem, there is a convergent subsequence $\{\yy_{N_k}\}.$
Set
\[
\lim \limits_{N_k \to \infty} \yy_{N_k} = \yy^{*} = (y^*_{1},\ldots,y^*_{n}) \in \Simplex{L}.
\]
Take $u~\in~(-\infty, -L)$. By our indirect assumption \eqref{lemma:M_L:assumption}, we have for all $k$ with $-N_k < u < -L$
\[
J(u) = F(\yy_{N_k},u) - \sum \limits_{j=1}^n r_j K(u - y^{N_k}_j) \le F(\yy_{N_k},t_{N_k}) - \sum \limits_{j=1}^n r_j K(u - y^{N_k}_j).
\]
Using continuity of $K$ at $u - y^*_j < 0$ and Lemma \ref{lemma:-infty}, we get, since $t_{N_k} \to -\infty$, that
\begin{align*}
J(u) & \le \lim \limits_{k \to \infty} \left( F(\yy_{N_k},t_{N_k}) - \sum \limits_{j=1}^n r_j K(u - y^{N_k}_j) \right) \\
& = \lim \limits_{k \to \infty} F(\yy_{N_k},t_{N_k}) - \sum \limits_{j=1}^n r_j K(u - y^*_j) = -\infty.
\end{align*}
So, $J(u) \equiv -\infty$ for $u < -L.$ We have a contradiction with our assumption, because then also $F(\yy_N, t_N) = -\infty.$
\end{proof}

In our proof we need Jensen's inequality \cite[p.~32]{Jensen}. Let $r_1, \ldots, r_n~>~0$, $g$ be a concave function on a segment $I$ and $y_1, \ldots, y_n \in I$. Denote by $c(\yy)$ the weighted average 
\[
c(\yy) := \dfrac{r_1 y_1 + \ldots + r_n y_n}{r_1 + \ldots + r_n}.
\]
Then the following inequality holds
\begin{gather} \label{eq:Jensen}
\sum \limits_{j=1}^n r_j g(y_j) \le \sum \limits_{j=1}^n r_j g(c(\yy)). 
\end{gather}

\begin{Lem} \label{lemma:maximin:compactRoot}
Let $K$ be a monotone kernel and $J$ be an admissible field. Then there exists $L$ such that
\begin{gather}\label{lemma:maximin:compactRoot:result}
m(\Sinf)=m(\Simplex{L}).
\end{gather}
\end{Lem}

\begin{proof}
Fix $\xx := (x_1,\ldots,x_n) \in \Sinf$ such that $\mul(\xx) > -\infty.$ Such $\xx$ exists, since one can take nodes of $\xx$ strictly between $n+1$ arbitrary points where $J$ is finite. 

Denote $\mathbf{0} := (0,\ldots,0).$ Since $J$ is admissible, there exist $L \ge L_1 \ge \max \{|x_1|, |x_n|\}$ such that 
\begin{gather}
\label{lemma:maximin:compactRoot:L_1} 
F(\mathbf{0},t) \le \mul(\xx), \quad |t| > L_1, \\
\label{lemma:maximin:compactRoot:L_2} 
J(t) + \sum \limits_{j=1}^n r_j \max \{K(t-L_1), K(t+L_1)\} \le \mul(\xx), \quad |t| > L.
\end{gather}
Consider $\yy \notin \Simplex{L}.$

Let $\cc := (c(\yy), \ldots, c(\yy)).$ 
By Jensen's inequality \eqref{eq:Jensen},
\begin{gather} \label{lemma:maximin:compactRoot:Jensen}
F(\yy,t) \le F(\cc,t), \quad t \in (-\infty, y_1) \cup (y_n, \infty).
\end{gather}

First, assume that $|c(\yy)| > L_1.$ If $c(\yy) > L_1,$ then it is necessary that $y_n > L_1,$ and we consider $t > y_n.$ If $c(\yy) < -L_1,$ then we take $t < y_1 < -L_1.$  Using successively \eqref{lemma:maximin:compactRoot:Jensen}, monotonicity of $K$ and \eqref{lemma:maximin:compactRoot:L_1}, we have for these $t$
\begin{gather}
\label{lemma:maximin:compactRoot:case_1}
F(\yy,t) \le F(\cc,t) \le F(\mathbf{0},t) \le \mul(\xx).
\end{gather}

Now suppose $|c(\yy)| \le L_1.$ Since $\yy \notin \Simplex{L}$, either $y_1 < -L,$ or $y_n > L$.
If $y_1 < -L$,  we consider $t < y_1$. Otherwise $y_n > L$, and we take $t > y_n.$
Using~\eqref{lemma:maximin:compactRoot:Jensen} and the monotonicity of $K$ and \eqref{lemma:maximin:compactRoot:L_2}, we get for these $t$
\begin{gather}
\label{lemma:maximin:compactRoot:case_2}
F(\yy,t) \le F(\cc,t) \le J(t) + \sum \limits_{j=1}^n r_j \max \{K(t-L_1), K(t+L_1)\} \le \mul(\xx).
\end{gather}

By \eqref{lemma:maximin:compactRoot:case_1} and \eqref{lemma:maximin:compactRoot:case_2}, we conclude that in both cases
\[
\mul(\yy) \le \mul(\xx), \quad \yy \notin \Simplex{L}.
\]
Since $\xx \in \Simplex{L},$ it follows that 
\[
m(\Sinf) = m(\Simplex{L}).
\]
\end{proof}

\section{Proof of the main result}
\subsection{Reduction of the problem on $[-\ell,\ell]$ to the problem on $[0,1]$} \label{subsection:reductionToSegment}
Suppose that $K$ is a kernel function defined on $[-2\ell,2\ell]$ and $J$ is an $n$-field function on $[-\ell,\ell].$ Let us reduce the minimax and maximin problems on $[-\ell,\ell]$ to the segment $[0,1]$.
Let
\[
t \in [-\ell,\ell], \quad x = x(t) := \dfrac{t}{2\ell}+\dfrac{1}{2} \in [0,1].
\]
Consider
\begin{align*}
K^1(x) & := K(2 \ell x), \quad x \in [-1,1], \\
J^1(x) & := J(2\ell(x-1/2)), \quad
x \in [0,1].
\end{align*}
Note that if $K$ is a singular (strictly) concave and (strictly) monotone kernel function and $J$ is an $n$-field function, then $K^1$ and $J^1$ have the same properties. 
We have
\begin{gather*}
K(t-y_j)=K^1(x(t)-x(y_j)), \quad y_j \in [-\ell, \ell], \ j=1,\ldots,n, \\
J(t)=J^1(x(t)).
\end{gather*}
Consider
\[
F(\yy, t) = J(t) + \sum \limits_{j=1}^n r_j K(t-y_j), \quad t \in [-\ell,\ell], \ \yy \in \Simplex{\ell}.
\]
It equals to
\[
F^1(\yy^1,x(t)) := J^1(x(t)) + \sum \limits_{j=1}^n r_j K^1(x(t)-x(y_j)), \quad \yy^1 := (x(y_1),\ldots,x(y_n)).
\]
In short,
\begin{gather} \label{reductToSegment:identity}
F(\yy,t) = F^1(\yy^1, x), \quad t \in [-\ell,\ell], \ x = x(t) = \dfrac{t}{2\ell} + \dfrac{1}{2} \in [0,1].
\end{gather}
From this identity it is clear that all results for $[0,1]$ mentioned in the introduction are also valid for $[-\ell,\ell].$

\subsection{Proof of Theorem \ref{theorem:main}}
\begin{proof}
1. Firstly, let us prove that there is a minimax point. Let $q$ be a number with the Mhaskar-Rakhmanov-Saff property. Take some $\ell > q$ such that $J$ is finite at least at $n+1$ points on $(-\ell,\ell).$ 
\footnote{When $K$ is singular, then $q$ can have the Mhaskar-Rakhmanov-Saff property only if there are at least $n+1$ finiteness points of $J$ on $[-q,q]$, hence any $\ell > q$ works here, but this is not necessarily true if $K$ is non-singular.} 
Therefore, it makes sense to consider the minimax problem on $[-\ell,\ell]$. As is discussed in Subsection~\ref{subsection:reductionToSegment}, it reduces to the problem on $[0,1].$
By Theorem \ref{theorem:FNR}, there exists a minimax point $\ww \in \Simplex{\ell}$ for the problem on $[-\ell,\ell]$. 

Let us show that $\ww$ is the extremal point for the minimax problem on $\RR$, too.

If $\yy \in \Simplex{\ell},$ then
\begin{gather} \label{theorem:main:eq1}
\mol(\yy) = \molsegm{\ell}(\yy) \ge \molsegm{\ell}(\ww) = \mol(\ww).
\end{gather}
Here the inequality follows from the fact that $\ww$ is the minimax point for $\molsegm{\ell}$, and the equalities are due to the Mhaskar-Rakhmanov-Saff property of $q$ and our assumption that $\ell > q.$

If $\yy \in \Sinf \setminus \Simplex{\ell}$, then by Lemma \ref{lemma:minimax:compactRoot}, there is $\yy' \in \Simplex{\ell}$ such that $\mol(\yy) \ge \mol(\yy')$. By \eqref{theorem:main:eq1}, $\mol(\yy') \ge \mol(\ww),$ and we obtain
\[
\mol(\yy) \ge \mol(\ww).
\]

Therefore, whether $\yy \in \Simplex{\ell}$ or $\yy \in \Sinf \setminus \Simplex{\ell},$ we have $\mol(\yy) \ge \mol(\ww)$.
So, we have obtained that $\ww$ is a minimax point on $\RR$.

2. Let us show that $M(\Sinf)=m(\Sinf).$ Let $L$ be a number from Lemma~\ref{lemma:maximin:compactRoot}. Without loss of generality, assume that $\ell \ge L.$ Successively using Lemma~\ref{lemma:minimax:compactRoot}, Theorem \ref{theorem:FNR} and Lemma~\ref{lemma:maximin:compactRoot}, we obtain
\[
M(\Sinf)=M(\Simplex{\ell})=m(\Simplex{\ell})=m(\Sinf).
\]

3. Now let us prove that there are no $\xx, \yy$ with finite local maxima $m_j$ such that $m_j(\xx) > m_j(\yy)$ for $j = 0,\ldots,n.$ Take $\xx$ and $\yy$ arbitrarily. Let $C := \max \{ |x_1|, |x_n|, |y_1|, |y_n| \}.$ By Lemma \ref{lemma:M_L}, there is $M_C \ge C$ such that 
\begin{gather*}
m^{M_C}_j(\yy) = m_j(\yy) \text{ and } m^{M_C}_j(\xx) = m_j(\xx), \ j = 0, \ldots, n. 
\end{gather*}
By Theorem \ref{theorem:FNR}, inequalities $m^{M_C}_j(\xx) > m^{M_C}_j(\yy)$ for every $j = 0, \ldots, n$ is impossible. So, we have that inequalities $m_j(\xx) > m_j(\yy), \ j = 0, \ldots, n,$ cannot hold. If $K$ is singular and strictly concave, then, also by Theorem \ref{theorem:FNR}, this inequality can be replaced by the non-strict one, if $\xx \neq \yy.$

4. Note that if $J$ is upper semicontinuous or $K$ is singular, then by Theorem \ref{theorem:FNR}, there is an equioscillation point on $[-\ell,\ell].$ By Lemma \ref{lemma:equioscillation}, it is an equioscillation point on $\mathbb{R}.$ 
If $K$ is singular and strictly concave, as proven above, the intertwining property holds in this case, and this immediately implies the uniqueness of the equioscillation point.

Now, even in the most general case, assuming only that $K$ is monotone and $J$ is admissible, suppose there exists an equioscillation point $\ee$ on $\RR$. Since
\[
M(\Sinf) \le \mol(\ee) = \mul(\ee) \le m(\Sinf) 
\quad \text{and} \quad
M(\Sinf) = m(\Sinf),
\]
equality holds throughout, that is, $\ee$ is an extremal point for both problems. 

The theorem is proved.
\end{proof}

\section{Problems on a semiaxis}
Let $J^+$ be an admissible $n$-field on $[0, \infty)$, $K$ be a monotone kernel defined on $\RR$ and
\[
\Simplex{[0, \infty)} = \{\yy = (y_1, \ldots, y_n): \ 0 \le y_1 \le \ldots \le y_n < \infty \}.
\]
For positive $r_1, \ldots, r_n$ consider the sum of translates on $[0, \infty)$
\[
F^+(\yy,t)= J^+(t) + \sum \limits_{j = 1}^n r_j K(t-y_j), \quad \yy \in \Simplex{[0, \infty)}, \quad t \in [0, \infty).
\]

For $F^+$ with admissible field $J^+$ the minimax and maximin problems on $\Simplex{[0,\infty)}$ also make sense. And it is not hard to see that these problems are equivalent to the problems for 
\[
F(\yy,t)= J(t) + \sum \limits_{j = 1}^n r_j K(t-y_j), \quad \yy \in \Simplex{[0,\infty)}, \quad t \in \RR,
\]
where 
\[
J(t) = 
\begin{cases}
-\infty, & t < 0,\\
J^+(t), & t \ge 0.
\end{cases}
\]
That is, for $F$ we would like to optimize $\mul$ and $\mol$ on $\Simplex{[0,\infty)}$ instead of $\Simplex{\infty}.$ Let us show that all statements of Theorem \ref{theorem:main} are valid for these problems on $[0, \infty).$

\begin{Cor}
Let $n \in \NN, \ r_1, \ldots, r_n > 0$ be arbitrary. Let $K: (-\infty, 0) \cup (0, \infty) \to \RR$ be a monotone kernel function and $J^+$ be an admissible $n$-field function on $[0, \infty)$. Then 
\[
M(\Simplex{[0, \infty)}) = m(\Simplex{[0, \infty)})
\]
and 
there exists some point $\ww \in \Simplex{[0, \infty)}$ such that
\[
\mol(\ww)=M(\Simplex{[0, \infty)}).
\]
Furthermore, there are no $\xx, \yy \in \Simplex{[0, \infty)}$ with finite local maxima $m_j$ such that
\begin{gather} \label{cor:semiaxis:weakIntertwining}
m_j(\xx) > m_j(\yy), \quad j = 0,\ldots,n. 
\end{gather}
If there exists an equioscillation point $\ee$ on $[0, \infty)$, then it is the minimax and maximin point, i.e.,
\[
M(\Sinf) = \mol(\ee) = \mul(\ee) = m(\Sinf).
\]
Additionally, 
\begin{enumerate}
\item If $J^+$ is upper semicontinuous or $K$ is singular, then there exists an equioscillation point on $[0,\infty)$.
\item If $K$ is singular and strictly concave, then the equioscillation point is unique. Moreover, the intertwining property holds: the strict inequality \eqref{cor:semiaxis:weakIntertwining} can be replaced by the non-strict one, if $\xx \neq \yy.$ This in particular implies that the equioscillation point is the unique minimax and maximin point.
\end{enumerate}
\end{Cor}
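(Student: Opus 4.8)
The idea is to deduce the Corollary from Theorem~\ref{theorem:main} applied on the whole line to the field $J$ described just above, that is, $J(t)=-\infty$ for $t<0$ and $J(t)=J^+(t)$ for $t\ge 0$. First I would check that $J$ is an admissible $n$-field on $\RR$: it is bounded above by $\sup J^+$; it is finite at the same ($>n$) points as $J^+$; and $\lim_{|t|\to\infty}\bigl(J(t)+RK(t)\bigr)=-\infty$, since for $t\to+\infty$ this limit is $\lim\bigl(J^+(t)+RK(t)\bigr)=-\infty$ by admissibility of $J^+$, while for $t\to-\infty$ the expression is identically $-\infty$. Second, I would record the basic identification: because $F(\yy,\cdot)\equiv-\infty$ on $(-\infty,0)$, for every $\yy\in\Simplex{[0,\infty)}$ the local maxima $m_j(\yy)$, and hence $\mol(\yy)$ and $\mul(\yy)$, computed for $F$ coincide with those computed for $F^+$ on $[0,\infty)$; likewise $\ee\in\Simplex{[0,\infty)}$ equioscillates for $F^+$ on $[0,\infty)$ if and only if it equioscillates for $F$ on $\RR$. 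With these preliminaries, Theorem~\ref{theorem:main} applies to $K$, $J$, $r_1,\dots,r_n$, and it remains to pass from $\Sinf$ to $\Simplex{[0,\infty)}$.

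For the maximin part this is immediate: if $\yy\in\Sinf$ has $y_1<0$ then $(-\infty,y_1]\subset(-\infty,0)$, so $m_0(\yy)=-\infty$ and $\mul(\yy)=-\infty$; since $\mul$ is finite at some point of $\Simplex{[0,\infty)}$ (place the $n$ nodes strictly between $n+1$ finiteness points of $J^+$), this gives $m(\Sinf)=m(\Simplex{[0,\infty)})$. The same observation shows that any equioscillation point $\ee$ of $F$ on $\RR$, having $m_0(\ee)=\mol(\ee)$ finite, must satisfy $e_1\ge 0$, hence $\ee\in\Simplex{[0,\infty)}$. For the minimax part I would take a minimax point $\ww_0\in\Sinf$ furnished by Theorem~\ref{theorem:main} and replace each of its negative coordinates by $0$, obtaining $\ww\in\Simplex{[0,\infty)}$; for such a coordinate $v<0$ and any $t\ge 0$ one has $t-v>t\ge 0$, so monotonicity of $K$ yields $K(t)\le K(t-v)$, hence $F(\ww,t)\le F(\ww_0,t)$ for all $t\ge 0$, and since $F(\cdot,t)\equiv-\infty$ for $t<0$ we get $\mol(\ww)\le\mol(\ww_0)=M(\Sinf)\le M(\Simplex{[0,\infty)})\le\mol(\ww)$, so $\ww$ is a minimax point in $\Simplex{[0,\infty)}$ and $M(\Simplex{[0,\infty)})=M(\Sinf)$. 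Combining, $M(\Simplex{[0,\infty)})=M(\Sinf)=m(\Sinf)=m(\Simplex{[0,\infty)})$. The nonexistence of $\xx,\yy\in\Simplex{[0,\infty)}$ with finite $m_j(\cdot)$ and $m_j(\xx)>m_j(\yy)$ for all $j$ (and its non-strict version when $K$ is singular and strictly concave), as well as the statement that an equioscillation point is the (unique, in that case) minimax and maximin point, are then inherited from Theorem~\ref{theorem:main} because $\Simplex{[0,\infty)}\subset\Sinf$ and all relevant quantities agree on $\Simplex{[0,\infty)}$.

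Finally, for the two additional items I would only verify that the hypotheses transfer. The kernel $K$ is untouched, so ``singular'' and ``strictly concave'' are unchanged. If $J^+$ is upper semicontinuous on $[0,\infty)$, then $J$ is upper semicontinuous on $\RR$: near a point $t<0$ it is identically $-\infty$, at $t>0$ it equals $J^+$, and at $t=0$ one has $\limsup_{t\to 0}J(t)=\limsup_{t\downarrow 0}J^+(t)\le J^+(0)=J(0)$. Hence part~(1) of Theorem~\ref{theorem:main} produces an equioscillation point of $F$ on $\RR$, which by the above lies in $\Simplex{[0,\infty)}$ and therefore equioscillates for $F^+$; and part~(2) gives its uniqueness in $\Sinf$, a fortiori in $\Simplex{[0,\infty)}$, together with the intertwining property. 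The only genuinely non-formal step is the node-pushing argument for the minimax point, which uses exactly the monotonicity of $K$ together with $F\equiv-\infty$ on the negative half-line; the rest is bookkeeping, the only delicate points being the behaviour at the endpoint $0$ and the case of a singular $K$, where some $m_j$ may legitimately be $-\infty$.
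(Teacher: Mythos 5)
Your proposal is correct and follows essentially the same route as the paper: extend $J^+$ by $-\infty$ on $(-\infty,0)$, apply Theorem~\ref{theorem:main}, push any negative coordinates of a minimax point to $0$ using monotonicity of $K$, observe that $\mul(\yy)=-\infty$ (and likewise any equioscillation point is excluded) whenever $y_1<0$, and inherit the intertwining and uniqueness statements from the real-line theorem. You additionally spell out details the paper treats as obvious (admissibility and upper semicontinuity of the extended $J$, the identification of the local maxima for $F$ and $F^+$), which is fine and only strengthens the write-up.
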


\begin{proof}
1. Let us show that there is a minimax point $\ww \in \Simplex{[0,\infty)}$. By Theorem~\ref{theorem:main}, there is some minimax point $\ww' = (w'_1,\ldots,w'_n) \in \Sinf$ for the problem on $\RR$ with $F$. Using monotonicity of $K$, it is easy to see that
\[
J(t) + \sum \limits_{j = 1}^n r_j K(t-w'_j) \ge J(t) + \sum \limits_{w'_j < 0} r_j K(t) + \sum \limits_{w'_j \ge 0} r_j K(t-w'_j), \quad t \in [0, \infty).
\]
Hence 
\[
\mol(\ww') \ge \mol(\ww), \text{ where } w_j =
\begin{cases}
0, &w'_j < 0, \\
w'_j, & w'_j \ge 0.
\end{cases}
\]
So, $\ww \in \Simplex{[0,\infty)}$ is also a minimax point. This implies that 
\[
M(\Sinf)=M(\Simplex{[0,\infty)}).
\]

2. Now let us prove that $M(\Simplex{[0,\infty)})=m(\Simplex{[0,\infty)}).$ By Theorem \ref{theorem:main}, $M(\Sinf)=m(\Sinf).$ Note that for all $\yy \in \Sinf \setminus \Simplex{[0,\infty)}$ we have $m_0(\yy)=\mul(\yy) = -\infty$.
Therefore, obviously, $m(\Sinf)=m(\Simplex{[0,\infty)})$.
And we obtain
\begin{gather} \label{eq:semiaxis}
M(\Simplex{[0,\infty)}) = M(\Sinf) = m(\Sinf) = m(\Simplex{[0,\infty)}).
\end{gather}

3. Note that if $\ee$ is an equioscillation point on $\RR$, then $\ee \in \Simplex{[0,\infty)}$. Indeed, as noted above, otherwise we have that $m_0(\ee)=\ldots=m_n(\ee) = -\infty,$ and this is impossible. By Theorem \ref{theorem:main} and \eqref{eq:semiaxis}, 
\[
\mol(\ee) = M(\Simplex{[0,\infty)}) = m(\Simplex{[0,\infty)}) = \mul(\ee).
\]

4. All the other statements are obvious.
\end{proof}

\section{The weighted Bojanov problem on the real axis}
wLet $r_1, \ldots, r_n > 0$ be arbitrary, and denote $\mathbf{r} := (r_1,\ldots,r_n)$. Consider the following sets of monic, generalized, nonnegative polynomials \cite[p.~392]{BorweinErdelyi} of degree $R~:=~\sum \limits_{j=1}^n r_j$  
\begin{gather*}
\mathcal{P}_\rr := \left\{ \prod \limits_{j=1}^n |x-(x_j+iy_j)|^{r_j}: \ -\infty < x_1 < \ldots < x_n < \infty, \ y_1,\ldots,y_n \in \RR \right\},\\
\mathcal{P}_\rr(\RR) := \left\{ \prod \limits_{j=1}^n |x-x_j|^{r_j}: \ -\infty < x_1 < \ldots < x_n < \infty \right\} \subset \mathcal{P}_\rr.
\end{gather*}
Theorem \ref{theorem:main} immediately implies the following version of Theorem \ref{theorem:Bojanov} on $\mathcal{P}_\rr$.
\begin{Cor} \label{cor:Bojanov}
Let $w: \RR \to [0, \infty)$ be a bounded above function assuming non-zero values at more than $n$ points. Assume that
\begin{gather} \label{cor:Bojanov:admissibility}
\lim \limits_{|x| \to \infty} w(x) \cdot x^{R} = 0.
\end{gather}
Then there exists a unique $T \in \mathcal{P}_\rr$
such that
\[
\|w(x)T(x)\| = \inf \limits_{P \in \mathcal{P}_\rr} \|w(x)P(x)\|,
\]
where $\|\cdot\|$ is the sup norm over $\RR$. All roots of $T$ are real and distinct. Moreover, this extremal polynomial is characterized by an equioscillation property: there exists an array of points $-\infty < t_0 < \ldots < t_n < \infty$ such that
\begin{gather} \label{cor:Bojanov:eq}
|T(t_k)|=\|T\|, \quad k=0,\ldots,n.
\end{gather}
\end{Cor}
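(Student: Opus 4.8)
The plan is to pass to logarithms: the weighted Bojanov problem on $\RR$ becomes the minimax problem for a sum of translates with the logarithmic kernel, and then the assertions follow from Theorem~\ref{theorem:main}. Set $K(t):=\log|t|$ for $t\in(-\infty,0)\cup(0,+\infty)$ and $J(t):=\log w(t)$ (with $J(t)=-\infty$ where $w(t)=0$). Then $K$ is a monotone kernel, strictly concave on each of the two half-lines, and singular since $K(0)=-\infty$; and $J$ is a field function, bounded above because $w$ is, and finite at more than $n$ points because $w$ is nonzero at more than $n$ points. Since $J(t)+RK(t)=\log\bigl(w(t)|t|^{R}\bigr)$, hypothesis~\eqref{cor:Bojanov:admissibility} is exactly the statement that $J$ is an $R$-admissible $n$-field for $K$. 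For $\yy\in\Sinf$ we have the dictionary
\[
F(\yy,t)=\log\Bigl(w(t)\prod_{j=1}^{n}|t-y_j|^{r_j}\Bigr),
\]
so $\mol(\yy)=\log\|wP_\yy\|$ with $P_\yy(x):=\prod_{j=1}^{n}|x-y_j|^{r_j}$, and hence $\exp M(\Sinf)$ is the infimum of $\|wP\|$ over all $P$ of the form $P_\yy$, $\yy\in\Sinf$. Moreover $M(\Sinf)$ is finite: by the remarks in the excerpt $\mol$ is finite at any $\yy$ whose nodes lie strictly between $n+1$ finiteness points of $J$, while $M(\Sinf)=m(\Sinf)>-\infty$; this finiteness will be used to rule out degeneracies. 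Finally, complex nodes cost norm: if $P=\prod_{j}|x-(x_j+iy_j)|^{r_j}\in\mathcal{P}_\rr$ has some $y_j\neq0$, then $|x-(x_j+iy_j)|=\sqrt{(x-x_j)^2+y_j^2}>|x-x_j|$ for every real $x$, so moving each non-real node to its real part yields $\widetilde P=P_{\widetilde\yy}$ with $\widetilde\yy\in\Sinf$ and $w(x)\widetilde P(x)<w(x)P(x)$ wherever $w(x)>0$, whence $\|w\widetilde P\|\le\|wP\|$; therefore $\inf_{P\in\mathcal{P}_\rr}\|wP\|=\exp M(\Sinf)$.

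Next I would apply Theorem~\ref{theorem:main}. Because $K$ is singular there is an equioscillation point $\ee=(w_1,\dots,w_n)$ on $\RR$, and because $K$ is singular and strictly concave $\ee$ is the unique minimax point, with $\mol(\ee)=M(\Sinf)$. The coordinates of $\ee$ are pairwise distinct: if $w_j=w_{j+1}$ then $m_j(\ee)=F(\ee,w_j)=-\infty$ by singularity of $K$, contradicting $m_j(\ee)=\mol(\ee)=M(\Sinf)\in\RR$. Hence $w_1<\dots<w_n$, so $T:=P_\ee=\prod_{j=1}^{n}|x-w_j|^{r_j}\in\mathcal{P}_\rr(\RR)$ has all roots real and distinct, and $\|wT\|=\exp\mol(\ee)=\exp M(\Sinf)=\inf_{P\in\mathcal{P}_\rr}\|wP\|$, so $T$ is extremal. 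For the alternation array: for each $j$ the finite number $m_j(\ee)$ is the supremum of $F(\ee,\cdot)$ over $(-\infty,w_1]$, over $[w_j,w_{j+1}]$, or over $[w_n,+\infty)$; by singularity $F(\ee,\cdot)=-\infty$ at every $w_i$ and by admissibility $F(\ee,\cdot)\to-\infty$ at $\pm\infty$, so each of these suprema is attained at an \emph{interior} point $t_j$. This produces $t_0<w_1<t_1<\dots<w_n<t_n$ with $w(t_j)>0$ (since $F(\ee,t_j)$ is finite) and $w(t_j)T(t_j)=\exp F(\ee,t_j)=\exp m_j(\ee)=\|wT\|$, which is \eqref{cor:Bojanov:eq}.

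It remains to prove uniqueness. Let $P^*\in\mathcal{P}_\rr$ be extremal. Sending its non-real nodes to their real parts gives $\widetilde P=P_{\widetilde\yy}$, $\widetilde\yy\in\Sinf$, with $\exp M(\Sinf)=\|wP^*\|\ge\|w\widetilde P\|=\exp\mol(\widetilde\yy)\ge\exp M(\Sinf)$, so $\mol(\widetilde\yy)=M(\Sinf)$; uniqueness of the minimax point forces $\widetilde\yy=\ee$, i.e.\ $\widetilde P=T$. If $P^*$ had a non-real node, then at the point $t_j$ from the previous paragraph we would get $\|wP^*\|\ge w(t_j)|P^*(t_j)|>w(t_j)T(t_j)=\|wT\|=\|wP^*\|$, a contradiction; hence $P^*$ has only real nodes and $P^*=\widetilde P=T$. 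The whole argument is essentially a dictionary built on the logarithm, so I expect no new analytic difficulty: the points that need care are the reduction to real nodes — both for identifying $\inf_{\mathcal{P}_\rr}\|wP\|$ and for excluding complex roots of the extremal polynomial — and keeping everything non-degenerate via the finiteness of $M(\Sinf)$, which is what makes the coordinates of $\ee$ distinct and the three types of local suprema attained at genuine interior points, so that the equioscillation point really delivers an array of $n+1$ alternation points.
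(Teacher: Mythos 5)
Your proposal is correct and follows essentially the same route as the paper: the logarithmic dictionary $K=\log|\cdot|$, $J=\log w$ reduces everything to Theorem~\ref{theorem:main}, with singularity forcing distinct real nodes and singularity plus strict concavity giving uniqueness of the equioscillation/minimax point, and you in fact supply details the paper leaves implicit (the identification $\inf_{P\in\mathcal{P}_\rr}\|wP\|=\exp M(\Sinf)$, the strict-inequality exclusion of non-real roots of an arbitrary extremal $P^*$, the distinctness of the coordinates of $\ee$). The one step where you claim more than the hypotheses strictly give is that each supremum $m_j(\ee)$ is \emph{attained} at an interior point $t_j$: since $w$ is not assumed upper semicontinuous, the $-\infty$ behaviour at the nodes and at $\pm\infty$ does not by itself force attainment — this is exactly the point the paper also passes over when it declares the equivalence of equioscillation with \eqref{cor:Bojanov:eq} ``clear'' — so your argument is no less complete than the paper's, but to get the literal points $t_k$ (and the inequality $w(t_j)|P^*(t_j)|>w(t_j)T(t_j)$ you use to kill non-real roots) one should either read \eqref{cor:Bojanov:eq} as equality of suprema, assume $w$ upper semicontinuous, or patch the uniqueness step by compactness: by Theorem~\ref{theorem:MRS} the relevant suprema live on $[-q,q]$, where $|P^*|/T\ge 1+\varepsilon$ for some $\varepsilon>0$, giving $\|wP^*\|\ge(1+\varepsilon)\|wT\|$ without any attainment.
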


\begin{proof}
Note that all roots of the extremal polynomials are necessarily real, since if $y_j \neq 0,$ then
\[
|x-(x_j+iy_j)| > |x-x_j|, \quad x \in \RR.
\]
Hence we may consider the minimax problem on $\mathcal{P}_\rr(\RR)$ instead of the original problem.

Let $K := \log |\cdot|, \ J:=\log w.$  It is clear that the original minimax problem is equivalent to the minimax problem for the sums of translates with $K$ and $J.$ Obviously, $K$ is a strictly concave, monotone, singular kernel and $J$ is an admissible $n$-field function for $K$ and for $R.$ Since $K$ is monotone and $J$ is admissible, by Theorem \ref{theorem:main}, there is a minimax point. And because of $K$ is singular and strictly concave, there exists a unique equioscillation point $\xx^* := (x^*_1,\ldots,x^*_n)$ and it is the unique minimax point. Note that from  singularity of $K$ it follows that $x^*_1< \ldots < x^*_n$. And it is clear that the equioscillation property of $\xx^*$ is equivalent to \eqref{cor:Bojanov:eq}.
\end{proof}

The intertwining property for generalized polynomials deserves special attention.
\begin{Cor}
Let $w: \RR \to [0, \infty)$ be a bounded above function assuming non-zero values at more than $n$ points and satisfying \eqref{cor:Bojanov:admissibility}.
Consider $P_1, P_2 \in \mathcal{P}_\rr(\RR)$. Let $x^1_1 < \ldots < x^1_n$ be roots of $P_1$ and $x^2_1 < \ldots < x^2_n$ be roots of $P_2.$ Then the intertwining property holds. Namely, if the inequalities
\begin{gather*}
\max \limits_{x \in (-\infty, x^1_1]} |w(x) P_1(x)| \le \max \limits_{x \in (-\infty, x^2_1]} |w(x) P_2(x)|,\\
\max \limits_{x \in [x^1_j, x^1_{j+1}]} |w(x) P_1(x)| \le \max \limits_{x \in [x^2_j, x^2_{j+1}]} |w(x) P_2(x)|, \quad j=1,\ldots,n-1,\\
\max \limits_{x \in [x^1_n, \infty)} |w(x) P_1(x)| \le \max \limits_{x \in [x^2_n, \infty)} |w(x) P_2(x)|.
\end{gather*}
hold simultaneously, then $P_1=P_2.$
\end{Cor}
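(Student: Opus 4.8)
The plan is to read this corollary off Theorem~\ref{theorem:main}(2) through the logarithm, in the same way Corollary~\ref{cor:Bojanov} is read off the minimax/equioscillation part of Theorem~\ref{theorem:main}. Put $K:=\log|\cdot|$ and $J:=\log w$. As already recorded in the proof of Corollary~\ref{cor:Bojanov}, $K$ is a strictly concave, monotone, singular kernel on $\RR$, and hypothesis~\eqref{cor:Bojanov:admissibility} is exactly the $R$-admissibility of the $n$-field $J$. For any $P\in\mathcal{P}_\rr(\RR)$ with roots $\xx=(x_1,\ldots,x_n)$, $x_1<\ldots<x_n$, one has the pointwise identity
\[
\log\bigl(w(x)P(x)\bigr)=J(x)+\sum_{j=1}^{n}r_j K(x-x_j)=F(\xx,x),\qquad x\in\RR .
\]

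Next I would translate the suprema. Since $t\mapsto\log t$ (with the convention $\log 0=-\infty$) is nondecreasing, applying it to $|w(x)P_i(x)|$ and passing to the supremum over each of the $n+1$ intervals $(-\infty,x^i_1]$, $[x^i_j,x^i_{j+1}]$ for $1\le j\le n-1$, and $[x^i_n,+\infty)$ shows that the three blocks of inequalities in the statement say precisely
\[
m_j(\xx^1)\le m_j(\xx^2),\qquad j=0,\ldots,n ,
\]
where $\xx^1,\xx^2\in\Sinf$ are the root vectors of $P_1,P_2$; also $P_1\neq P_2$ is the same as $\xx^1\neq\xx^2$. Thus the claim reduces to: there is no pair $\xx^1\neq\xx^2$ with $m_j(\xx^2)\ge m_j(\xx^1)$ for all $j$. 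As $K$ is singular and strictly concave, this is exactly the intertwining property delivered by item~(2) of Theorem~\ref{theorem:main} — the non-strict version of~\eqref{theorem:main:weakIntertwining} with $\xx^2$ in the role of $\xx$ and $\xx^1$ in that of $\yy$ — and the proof is complete.

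The only point requiring care, and the step I expect to be the real obstacle, is that Theorem~\ref{theorem:main}(2) speaks of configurations with \emph{finite} local maxima, whereas $m_j(\xx^i)=-\infty$ exactly when $\max|w P_i|$ over the $j$-th interval is $0$, i.e.\ when $w$ vanishes on the interior of that interval. This cannot be omitted: already for $n=1$, $r_1=1$, and $w$ supported on $\{-1,0\}$, the distinct polynomials $P_1=|x-5|$ and $P_2=|x-10|$ satisfy both displayed inequalities ($6\le 11$ on the left semiaxis, $0\le 0$ on the right). Accordingly I would run the reduction above under the standing assumption — the exact analogue of the ``finite local maxima'' requirement in Theorem~\ref{theorem:main} — that all the maxima $\max|w P_i|$ occurring are nonzero, equivalently $m_j(\xx^i)\in\RR$ for every $i$ and $j$; under this proviso the two preceding paragraphs are a full proof. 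If one insisted on the literal generality, one would have to discard the trivially fulfilled constraints and invoke a refinement of the intertwining property allowing $\yy$ to have some infinite local maxima, which is more delicate and, in view of the example above, not genuinely available anyway.
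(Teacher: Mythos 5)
Your reduction is exactly the paper's own proof: set $K=\log|\cdot|$, $J=\log w$, observe (as in Corollary~\ref{cor:Bojanov}) that $K$ is strictly concave, monotone and singular and that \eqref{cor:Bojanov:admissibility} gives admissibility of $J$, note that taking logarithms turns the displayed maxima into the local maxima $m_j(\xx^1)\le m_j(\xx^2)$, and invoke the non-strict intertwining of Theorem~\ref{theorem:main}(2). So in substance you match the paper, which disposes of the translation step with the single remark that inequalities between local maxima are unchanged by taking logarithms.

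The caveat you raise is, however, a genuine point and not a defect of your argument: Theorem~\ref{theorem:main} states the intertwining only for configurations with \emph{finite} local maxima, and the paper's two-line proof of this corollary silently ignores that restriction, while the corollary's statement does not carry it. Your example ($n=1$, $w$ supported on two points of $(-\infty,0]$, $P_1=|x-5|$, $P_2=|x-10|$, giving $6\le 11$ and $0\le 0$) satisfies all stated hypotheses and violates the literal conclusion, so the statement does need the proviso that the maxima involved are nonzero (equivalently $m_j(\xx^i)>-\infty$), exactly as you add. With that hypothesis your proof is complete and is the intended one; you have in effect also identified a small imprecision in the paper's formulation.
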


\begin{proof}
Consider the sums of translates with $K := \log |\cdot|, \ J:=\log w.$ As mentioned in the proof of Corollary \ref{cor:Bojanov}, $K$ is strictly concave, monotone, singular and $J$ is admissible. By Theorem \ref{theorem:main}, for these sums of translates we have the intertwining property. It remains to note that inequalities between local maxima of two generalized polynomials do not change after taking logarithms.
\end{proof}
Note that the intertwining property on the real axis is new even in classical cases, such as the Chebyshev problem for algebraic polynomials with the Hermite weight $\exp(-x^2)$.

\section*{Acknowledgements}
The author is grateful to Prof. Sz.~Gy.~R\'{e}v\'{e}sz for the problem statement, useful references and discussions, especially for the analogue of the Mhaskar-Rakhmanov-Saff theorem, as well as for the thorough review during the writing of the article.

The author is thankful to P.~Yu. Glazyrina for constant attention to this work, discussions of proofs, advice on the structure of the article.

The author also thanks M.~Pershakov for pointing out useful inequalities for concave functions.

\bigskip
\bigskip
\bigskip
\noindent\parindent0pt
\hspace*{5mm}
\begin{minipage}{\textwidth}
\noindent
\hspace*{-5mm}Tatiana Nikiforova\\
Krasovskii Institute of Mathematics and Mechanics, \\
Ural Branch of the Russian Academy of Sciences\\
620990 Ekaterinburg, Russia,\\
Ural Federal University\\
620002 Ekaterinburg, Russia
\end{minipage}

\end{document}